\newtheorem{theorem}{Theorem}[section]
\newtheorem{proposition}[theorem]{Proposition}
\newtheorem{lemma}[theorem]{Lemma}
\newtheorem{corollary}[theorem]{Corollary}
\theoremstyle{definition}
\newtheorem{definition}[equation]{Definition}
\numberwithin{equation}{section}
\begin{document}

\baselineskip=15.5pt

\title[Holomorphic affine connections on non-K\"ahler manifolds]{Holomorphic affine 
connections on non-K\"ahler manifolds}

\author[I. Biswas]{Indranil Biswas}

\address{School of Mathematics, Tata Institute of Fundamental
Research, Homi Bhabha Road, Bombay 400005, India}

\email{indranil@math.tifr.res.in}

\author[S. Dumitrescu]{Sorin Dumitrescu}

\address{Universit\'e Nice Sophia Antipolis, LJAD, UMR 7351 CNRS,
Parc Valrose, 
06108 Nice Cedex 2, France}

\email{dumitres@unice.fr}

\subjclass[2010]{53B21, 53C56, 53A55}

\keywords{Calabi-Yau manifolds, holomorphic affine connections, algebraic dimension}

\date{}

\begin{abstract}
Our aim here is to investigate the holomorphic geometric structures on
compact complex manifolds which may not be K\"ahler. 
We prove that holomorphic geometric structures of affine type on compact 
Calabi-Yau manifolds with polystable tangent bundle (with respect to some Gauduchon 
metric on it) are locally homogeneous. In particular, if the geometric structure is rigid in Gromov's sense, 
then the fundamental group of the manifold must be infinite. We also prove that 
compact complex manifolds of algebraic dimension one bearing a holomorphic Riemannian 
metric must have infinite fundamental group.
\end{abstract}

\maketitle

\section{Introduction}

A motivation of this article is the following open question: {\it If the holomorphic 
tangent bundle $TX$ of a compact complex manifold $X$ admits a holomorphic connection 
(also called holomorphic affine connection), does it also admit a flat holomorphic 
connection ?}

If $X$ is K\"ahler the classical Chern-Weil theory shows that Chern classes with 
rational coefficients $c_i(X,\mathbb{Q})$ must vanish and, consequently, there is no 
topological obstruction for $TX$ to admit a flat connection. Moreover, from Yau's 
theorem proving Calabi conjecture~\cite{Ya} (see also~\cite{Be} and ~\cite{IKO}) $X$ 
admits a finite unramified cover which is a complex torus. In particular,
the standard complex affine structure of the torus produces 
a flat torsionfree affine connection on $X$.

It is also proved in~\cite{IKO} that compact complex surfaces admitting holomorphic 
affine connections are biholomorphic to quotients of open sets in $\mathbb{C}^2$ by a 
discrete group of complex affine transformations acting properly and without fixed 
points. In particular, these surfaces also admit a torsionfree flat affine connection 
induced by the complex affine structure of $\mathbb{C}^2$.

Recall that an interesting class of compact non-K\"ahler manifolds which generalize 
complex tori are those manifolds whose holomorphic tangent bundle is holomorphically 
trivial. These,  called parallelizable manifolds, are by a result of Wang~\cite{Wa} 
known to be biholomorphic to a quotient of a complex Lie group $G$ by a cocompact 
lattice $\Gamma$ in $G$. Such a quotient $G/ \Gamma$ is K\"ahler if and only if $G$ is 
abelian.

All parallelizable manifolds admit an obvious flat affine connection for which all 
global holomorphic vector fields coming from right invariant vector fields on $G$ are 
parallel. It was proved in~\cite{D5} that parallelizable manifolds $G/\Gamma$ of 
complex semi-simple Lie groups $G$ do not admit {\it torsionfree} flat affine 
connections.

Interesting exotic deformations of parallelizable manifolds ${\rm SL}(2, \mathbb{C})/ 
\Gamma$ were constructed by Ghys in~\cite{Gh}.
Those  deformations  are constructed  choosing a group homomorphism 
 $u\,:\, \Gamma \,\longrightarrow\, {\rm SL}(2, \mathbb{C})$ and considering the embedding 
 $ \gamma\,\longmapsto\, (u(\gamma), \gamma)$ of $\Gamma$ into ${\rm SL}(2, \mathbb{C})
\times{\rm SL}(2, \mathbb{C})$ (acting on ${\rm SL}(2, \mathbb{C}$) by left and right translations).
Algebraically, the  action is given by:
 $$(\gamma,m) \,\in\, \Gamma \times {\rm SL}(2,\mathbb{C})\,\longmapsto\, u(\gamma^{-1}) m
\gamma\,\in\, {\rm SL}(2,\mathbb{C}).$$
 
It is proved in \cite{Gh} that, for $u$ close enough to the trivial morphism, $\Gamma$ 
acts properly and freely on ${\rm SL}(2, \mathbb{C})$ such that the quotient $M(u, \Gamma)$ 
is a complex compact manifold (covered by ${\rm SL}(2, \mathbb{C)}$). In general, these 
examples do not admit parallelizable manifolds as finite covers. Moreover, for generic 
$u$ the space of holomorphic global vector fields is trivial and the holomorphic 
tangent bundle is {\it simple} (see definition in Section~\ref{section: tensors}).
Notice that the holomorphic flat affine connection on ${\rm SL}(2,\mathbb{C})$ for which 
right invariant vector fields are parallel is also left invariant (since left 
translations preserve parallel vector fields). So this connection descends to the 
quotients $M(u, \Gamma)$.

But the quotients $M(u, \Gamma)$ also admit non-flat holomorphic affine connections. 
In order to see that, consider the (non-degenerate) Killing quadratic form on the Lie 
algebra of ${\rm SL}(2, \mathbb{C})$ and the associated right invariant holomorphic 
Riemannian metric $g$ on ${\rm SL}(2, \mathbb{C})$ (see its definition in
Section~\ref{section: geometric structures}). Since the Killing quadratic form is
invariant under the adjoint 
representation, the induced holomorphic Riemannian metric on ${\rm SL}(2, 
\mathbb{C})$ is bi-invariant, so it descends to the quotients $M(u, \Gamma)$. Notice that
$g$ is locally 
isomorphic to the complexification of the spherical metric on $S^3$ and it has 
constant non-zero sectional curvature. Its associated holomorphic Levi-Civita 
connection is not flat, but {\it locally homogeneous} (since it is induced by a bi-invariant 
connection on ${\rm SL}(2, \mathbb{C})$).

It may be mentioned that a positive answer to the open question at the beginning of the 
introduction implies that compact simply connected manifolds $X$ do not admit holomorphic 
affine connections. Indeed, since $X$ is simply connected, the parallel transport of a 
{\it flat} holomorphic affine connection on $TX$ induces a holomorphic trivialization 
of $TX$. By Wang theorem, $X$ is then biholomorphic to a compact quotient of a complex 
Lie group by a lattice. In particular, the fundamental group of $X$ is infinite.

In this direction, it was recently proved in \cite{DM} that {\it compact complex 
simply connected manifolds of algebraic dimension zero do not admit a holomorphic affine 
connection}.

One of the main results here is a similar theorem for manifolds {\it of 
algebraic dimension one}, assuming that the connection is the Levi-Civita connection 
of a holomorphic Riemannian metric (see Theorem~\ref{theorem: alg dim 1}).

Also Theorem \ref{theorem: simple} proves that {\it compact complex simply connected 
manifolds with simple holomorphic tangent bundle do not admit holomorphic Riemannian 
metrics.}

Another important result is Theorem \ref{theorem: loc hom} which extends the main 
result of \cite{D3} to non-K\"ahler Calabi-Yau manifolds (see Section \ref{section: 
tensors} and~\cite{To}): {\it Holomorphic geometric structures of affine type on 
compact Calabi-Yau manifolds with polystable tangent bundle (with respect to some 
Gauduchon metric) are locally homogeneous}. Notice that this statement is not valid for 
holomorphic geometric
structures of non-affine type, since Ghys constructed in \cite{G2} holomorphic
nonsingular foliations of codimension one on complex tori which are not translation
invariant and also not locally homogeneous.

{}From Theorem \ref{theorem: loc hom} we deduce Corollary \ref{corollary: loc hom} : 
{\it Compact complex simply connected Calabi-Yau manifolds with polystable tangent 
bundle (with respect to some Gauduchon metric) do not admit rigid holomorphic 
geometric structures of affine type.} In particular, they do not admit holomorphic 
affine connections.

The paper is organized in the following way. After this introduction, Section 
\ref{section: geometric structures} introduces the framework of rigid geometric 
structures in Gromov's sense with interesting examples. Section \ref{section: tensors} 
recall the notions of stability and polystability of the holomorphic tangent bundle of 
a complex manifold endowed with a Gauduchon metric and proves the vanishing Lemma 
\ref{lem1} for (non necessarily K\"ahler) Calabi-Yau manifolds. We also deduce here 
Theorem \ref{theorem: loc hom} from Lemma \ref{lem1}. Section \ref{section: alg dim} 
contains Theorem \ref{theorem: algebraic dimension} proving the existence of global 
Killing fields for holomorphic rigid geometric structure on compact complex simply 
connected manifolds of non maximal algebraic dimension. Then we deduce 
Theorem~\ref{theorem: alg dim 1}, Theorem \ref{theorem: simple} and Corollary 
\ref{corollary: loc hom}.

\section{Geometric structures and Killing fields} \label{section: geometric structures}

Holomorphic affine connections are {\it rigid geometric structures} in Gromov's 
sense~\cite{DG}. Let us briefly recall this definition in the holomorphic category.

Let $X$ be a complex manifold of complex dimension $n$. For any integer $k \,\geq\, 1$, we 
associate to it the principal bundle of $k$-frames $R^k(X)\, \longrightarrow
\,X$, which is the bundle of 
$k$-jets of local holomorphic coordinates on $X$. The corresponding structure group 
$D^k$ is the group of $k$-jets of local biholomorphisms of $\mathbb{C}^n$ fixing the 
origin. This $D^k$ is a complex algebraic group.

\begin{definition}
A {\it holomorphic geometric structure} $\phi$ of order $k$ on $X$ is a holomorphic 
$D^k$-equivariant map from $R^k(X)$ to a complex algebraic manifold $Z$ endowed with 
an algebraic action of $D^k$.  The geometric structure $\phi$ is called of {\it affine}
type if $Z$ is a complex affine manifold.
\end{definition}

Holomorphic tensors are holomorphic geometric structures of affine type of order one, 
and holomorphic affine connections are holomorphic geometric structures of affine type 
of order two \cite{DG}. Holomorphic foliations and holomorphic projective connections are 
holomorphic geometric structure of non-affine type.

Another important geometric structure of order one is given in the following definition.

\begin{definition}  A {\it holomorphic Riemannian metric}
on $X$ is a holomorphic section
$$
g\, \in\, H^0(X,\, \text{S}^2((TX)^*))\, ,
$$
where $\text{S}^i$ stands for the $i$-th symmetric product,
such that for every point $x\, \in\, X$ the quadratic  form $g(x)$ on $T_xX$
is nondegenerate. 
\end{definition}

As in the Riemannian or pseudo-Riemannian setting, one associates to a holomorphic 
Riemannian metric $g$ a unique holomorphic affine connection $\nabla$. This connection 
$\nabla$, called the Levi-Civita connection for $g$, is uniquely determined by the 
following properties: $\nabla$ is torsionfree and $g$ is parallel with respect to 
$\nabla$. Using $\nabla$ one can compute the curvature tensor of $g$ which vanishes 
identically if and only if $g$ is locally isomorphic to the standard flat model 
$dz_1^2 + \ldots + dz_n^2$. For more details about the geometry of holomorphic 
Riemannian metrics one can see \cite{D6,DZ}

A more flexible geometric structure is a holomorphic conformal structure.  

\begin{definition} A {\it holomorphic conformal structure} on a complex manifold  $X$
is a holomorphic section $\omega$ of the bundle  $S^2(T^{*}X) \otimes L$, where $L$ is
some holomorphic line bundle over $X$, such that at any point $x$ in $X$ the section
$\omega(x)$ is nondegenerate.
\end{definition} 

Roughly speaking this means that $X$ admits an open cover such that on each open set 
in the cover, $X$ admits a holomorphic Riemannian metric such that on the overlaps of two 
open sets the two given holomorphic Riemannian metrics agree up to a nonzero 
multiplicative constant.

Here the flat example is the quadric \(z_0^2+ z_1 ^2 + \ldots +z_{n+1}^2=0\) in 
$P^{n+1}(\mathbb{C})$ with the conformal structure induced by the quadratic form 
$dz_0^2+dz_1^2+\ldots+dz_{n+1}^2$ on the quadric. The automorphism group of the 
quadric preserving its canonical conformal structure is $\mathbb{P}O(n+2, \mathbb{C})$.
A classical result due to Gauss asserts that all conformal structures on surfaces are 
locally isomorphic to the two-dimensional quadric.
Any complex manifold $M$ of complex dimension $n \,\geq\, 3$ bearing a flat holomorphic
conformal  structure (meaning that the Weyl tensor of curvature vanishes on entire $M$)
is  locally modelled on the quadric.

A holomorphic conformal structure is a holomorphic geometric structure of non-affine 
type.

\begin{definition}
A locally defined holomorphic vector field $Y$ is a (local)
{\it Killing field} of a holomorphic geometric structure
$\phi \,:\, R^k(X) \,\longrightarrow\, Z$ if its canonical lift  from $X$
to $R^k(X)$ preserves the fibers of $\phi$.
\end{definition}

In other words, $Y$ is a Killing field of $\phi$ if and only if its (local) flow 
preserves $\phi$. The Killing vector fields form a Lie algebra with respect to the 
Lie bracket.

A classical result in Riemannian (and pseudo-Riemannian) geometry shows that $Y$ is 
a Killing field of a (holomorphic) Riemannian metric $g$ on $X$ if and only if 
$\nabla_{\cdot} Y$ is a pointwise $g$-skew-symmetric section of $\text{End}(TX)$, 
where $\nabla$ is the Levi-Civita connection associated to $g$~\cite{K}.

A holomorphic geometric structure $\phi$ is called {\it locally homogeneous} if 
the holomorphic tangent bundle $TX$ is spanned by local Killing vector fields of 
$\phi$ in the neighborhood of any point $x\,\in\, X$.  This implies that for any pair 
of points $x\, ,x' \,\in\, X$ there exists a local biholomorphism that
sends $x$ to $x'$ and preserves $\phi$.

A holomorphic geometric structure $\phi$ is {\it rigid} of order $l$ in Gromov's sense 
if any local biholomorphism preserving $\phi$ is determined by its $l$-jet at any 
given point.

Holomorphic affine connections are rigid of order one in Gromov's sense (see the nice 
expository survey \cite{DG}). The rigidity comes from the fact that local 
biholomorphisms fixing a point and preserving a connection linearize in exponential 
coordinates, so they are completely determined by their differential at the fixed 
point.

Holomorphic Riemannian metrics, holomorphic projective connections and holomorphic 
conformal structures in dimension $\geq 3$ are rigid holomorphic geometric structures. 
Holomorphic symplectic structures and holomorphic foliations are non-rigid geometric 
structures~\cite{DG}.

\section{Calabi--Yau manifolds and vanishing results} \label{section: tensors}

Let $X$ be a compact complex manifold. A Hermitian structure on the holomorphic
tangent bundle $TX$ is called a \text{Gauduchon metric} if the corresponding real
$(1\, ,1)$--form $\omega$ on $X$ satisfies the equation
$$
\partial\overline{\partial}\omega^{n-1}\,=\, 0\, ,
$$
where $n\,=\, \dim_{\mathbb C}X$. It is
known that any compact complex manifold admits a Gauduchon metric \cite{Ga}.
Fix a Gauduchon metric on $X$. Let $\omega_0$ be the corresponding $(1\, ,1)$--form
on $X$.

Let $\Omega^{1,1}_{cl}(X,\, {\mathbb R})$ denote the space of all globally
defined $d$--closed real $(1\, ,1)$--forms on $X$. Define the Bott--Chern cohomology for $X$
$$
H^{1,1}_{BC}(X,\, {\mathbb R})\,:=\, \frac{\Omega^{1,1}_{cl}(X,\, {\mathbb R})}{
\{\sqrt{-1}\partial\overline{\partial}\alpha\,\mid\, \alpha\,\in\,
C^{\infty}(X,\,{\mathbb R})\}}\, .
$$
The functional
$$
\Omega^{1,1}_{cl}(X,\, {\mathbb R})\, \longrightarrow\, {\mathbb R}\, ,
~~~~~\alpha\, \longmapsto\, \int_X \alpha\wedge \omega^{n-1}_0
$$
evidently descends to a functional on $H^{1,1}_{BC}(X,\, {\mathbb R})$.

Given a holomorphic line bundle $L$ on $X$, choose a Hermitian structure
$h_L$ on $L$, and consider
the element $c(L)\,\in\, H^{1,1}_{BC}(X,\, {\mathbb R})$ given by the curvature of the
Chern connection on $L$ associated to $h_L$. Clearly, $c(L)$ is independent of the
choice of $h_L$.

For a torsionfree coherent analytic sheaf $F$ on $X$, define
$$
\text{degree}(F)\,:=\, \int_X c(\det F)\wedge \omega^{n-1}_0\, \in\, \mathbb R\, ,
$$
where $\det F$ is the determinant line bundle for
$F$ \cite[p.~166, Proposition~6.10]{K}. The real number
$\text{degree}(F)/\text{rank}(F)$ is called the \textit{slope} of $F$ and it is denoted
by $\mu(F)$. A torsionfree coherent analytic sheaf $V$ on $X$ is called
\textit{stable} if
$$
\mu(F)\, <\, \mu(V)
$$
for all coherent analytic sub-sheaf $F\, \subset\, V$ with $0\, <\, \text{rank}(F)\,
<\, \text{rank}(V)$. If $V$ is a direct sum of stable sheaves of same slope, then it
called \textit{polystable}. See \cite{LT} for more details.

\subsection{Tensors on Calabi--Yau manifolds}

We recall that $X$ is called \textit{Calabi--Yau} if $c(K_X)\,\in\, H^{1,1}_{BC}(X,\, 
{\mathbb R})$ vanishes, where $K_X$ is the canonical line bundle of $X$ \cite{To}. In 
particular, if $K_X$ is trivial or torsion (i.e. of finite order), $X$ is Calabi-Yau, but the converse is not 
true for non-K\"ahler manifolds (see~\cite[Section~3]{To}).

A K\"ahler Calabi--Yau manifold admits
a Ricci flat K\"ahler metric \cite{Ya}, but there is no such result for a general complex
Calabi--Yau manifold. Notice that complex surfaces admitting holomorphic affine
connections have  vanishing first Chern class (in $H^2(X, \mathbb{R})$) \cite{IKO}, but
they are not always 
Calabi--Yau: examples are provided by linear Hopf surfaces. The reader will find more
information and nice examples in~\cite[Section~3]{To}.

\begin{lemma}\label{lem1}
Let $X$ be a compact Calabi--Yau manifold such that
$TX$ is polystable with respect to some Gauduchon metric on $X$. If a holomorphic
section
$$
\psi\, \in\, H^0(X, \, (TX)^{\otimes m}\otimes ((TX)^*)^{\otimes l})
$$
for some $m,l \in \mathbb{N}$ vanishes at some point of $X$, then $\psi\,=\, 0$.
\end{lemma}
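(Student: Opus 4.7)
My plan is to decompose the tensor bundle into stable summands and analyze $\psi$ component-wise. The first step is to establish that $E := (TX)^{\otimes m} \otimes ((TX)^*)^{\otimes l}$ is polystable of slope zero with respect to $\omega_0$. The slope is immediate from the Calabi--Yau hypothesis: since $c(K_X) = 0$ in $H^{1,1}_{BC}(X,\mathbb{R})$, we have $\mathrm{degree}(TX) = -\mathrm{degree}(K_X) = 0$ and hence $\mu(E) = (m-l)\mu(TX) = 0$. For polystability of $E$, I would invoke the Kobayashi--Hitchin correspondence in the Gauduchon setting (Li--Yau, Bando--Siu), which produces a Hermitian--Einstein metric on $TX$ with vanishing Einstein constant; this metric induces Hermitian--Einstein metrics on all tensor and dual bundles with the same (vanishing) Einstein constant, and any Hermitian--Einstein bundle is polystable. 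Write $E = \bigoplus_{i=1}^N F_i$ as a direct sum of stable sheaves, each of slope zero.

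Next, I would regard $\psi$ as a morphism $\mathcal{O}_X \to E$, split into components $\psi_i : \mathcal{O}_X \to F_i$. Assuming $\psi \neq 0$, some $\psi_i$ is nonzero. Since $\mathcal{O}_X$ is torsion-free of rank one, any nonzero $\psi_i$ is injective, so its image is a rank-one subsheaf of $F_i$ isomorphic to $\mathcal{O}_X$, hence of slope zero. If $\mathrm{rank}(F_i) \geq 2$, the saturation of the image inside $F_i$ would be a proper line subsheaf of the stable sheaf $F_i$ with slope $\geq 0 = \mu(F_i)$, contradicting the strict inequality in the stability condition. So every nonzero $\psi_i$ must land in a rank-one stable summand.

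To finish, I would treat the rank-one summands. If $F_i$ has rank one and $\psi_i \neq 0$, its zero divisor $D_i$ is effective and $F_i \cong \mathcal{O}_X(D_i)$. Then $\mathrm{degree}(F_i) = \int_X c(\mathcal{O}_X(D_i)) \wedge \omega_0^{n-1} = \int_{D_i} \omega_0^{n-1}|_{D_i}$, which is strictly positive whenever $D_i$ is nonempty, because $\omega_0$ is a positive $(1,1)$-form on $X$ and its restriction to the smooth locus of $D_i$ is a positive $(1,1)$-form on $D_i$. Since $\mathrm{degree}(F_i) = 0$, we conclude $D_i = \emptyset$, so $\psi_i$ is nowhere vanishing. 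The hypothesis that $\psi$ vanishes at some $x_0 \in X$ forces every $\psi_i$ to vanish at $x_0$; combined with the conclusion above, every rank-one component is zero as well, and hence $\psi = 0$.

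The main obstacle is securing the polystability of $E$ in the non-K\"ahler Gauduchon setting, since the standard K\"ahler proofs of stability under tensor and dual operations do not transfer by purely algebraic means; this step relies on the Kobayashi--Hitchin correspondence for Gauduchon metrics. Once that input is granted, the remainder is a short sheaf-theoretic argument combined with the positivity of the Gauduchon degree on effective divisors.
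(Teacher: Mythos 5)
Your argument is correct, and it shares its essential input with the paper's proof but finishes along a genuinely different route. Both arguments begin the same way: the Li--Yau/L\"ubke--Teleman correspondence gives a Hermitian--Einstein metric on $TX$ with vanishing Einstein factor (by the Calabi--Yau hypothesis), the induced metric on $E=(TX)^{\otimes m}\otimes((TX)^*)^{\otimes l}$ is again Hermitian--Einstein with factor $(m-l)\cdot 0=0$, and hence $E$ is polystable of slope zero. At that point the paper simply quotes the vanishing theorem for Hermitian--Einstein bundles with nonpositive Einstein factor (Theorem 2.2.1 in L\"ubke--Teleman): the section $\psi$ is \emph{parallel} for the Chern connection, so vanishing at one point forces $\psi\equiv 0$. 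You instead decompose $E=\bigoplus F_i$ into stable slope-zero summands (each locally free, being a direct summand of a bundle) and argue sheaf-theoretically: a nonzero section of a stable slope-zero summand of rank $\ge 2$ would give a rank-one coherent subsheaf of slope $\ge 0$, violating strict stability (note the paper's definition of stability quantifies over all coherent subsheaves, so you do not even need to pass to the saturation); and a nonzero section of a degree-zero line bundle summand must be nowhere vanishing, since the degree of $\mathcal{O}_X(D)$ for an effective nonempty divisor $D$ is $\int_D\omega_0^{n-1}>0$. This last positivity is the only place where you should be slightly more careful: identifying $\mathrm{degree}(\mathcal{O}_X(D))$ with $\int_D\omega_0^{n-1}$ uses the Lelong--Poincar\'e formula together with an integration by parts whose boundary term dies precisely because of the Gauduchon condition $\partial\overline{\partial}\omega_0^{n-1}=0$; this is standard (it is what makes the Gauduchon degree well defined and positive on effective divisors) but deserves a citation. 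The trade-off: the paper's route is shorter and yields the stronger conclusion that $\psi$ is parallel (hence of constant norm), while yours isolates the analytic input in the single statement ``$E$ is polystable of slope zero'' and then proceeds purely by slope arithmetic, which makes transparent exactly which structural feature of $E$ is being used.
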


\begin{proof}
Since $TX$ is polystable, it admits a Hermitian--Yang--Mills metric $H$
\cite[p. 572]{LY} (see also \cite[p. 61, Theorem 3.0.1]{LT}). The Hermitian
structure on $(TX)^{\otimes m}\otimes ((TX)^*)^{\otimes l}$ induced by $H$ also
satisfies the Hermitian--Yang--Mills condition. Note that this implies that
$(TX)^{\otimes m}\otimes ((TX)^*)^{\otimes l}$ is polystable. 

Since $X$ is Calabi--Yau, the Einstein factor $e_H$ for the Hermitian--Yang--Mills metric $H$
is zero. The Einstein factor for the Hermitian--Yang--Mills metric on
$(TX)^{\otimes m}\otimes ((TX)^*)^{\otimes l}$ induced by $H$ is $(m-l)e_H$, and hence
this Einstein factor is also zero.
Now we conclude that the section $\psi$ in the lemma is flat with respect to the
Chern connection on $(TX)^{\otimes m}\otimes ((TX)^*)^{\otimes l}$ corresponding
to the Hermitian--Yang--Mills structure \cite[p. 50, Theorem 2.2.1]{LT}.
Therefore, if the section $\psi$ vanishes at some  point of $X$ then it vanishes identically.
\end{proof}

\begin{theorem}\label{theorem: loc hom}
Let $X$ be a compact Calabi-Yau manifold such that $TX$ is polystable with respect to 
some Gauduchon metric on $X$. Then any holomorphic geometric structure of affine type 
on $X$ is locally homogeneous.
\end{theorem}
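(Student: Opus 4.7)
The plan is to recast $\phi$ as a holomorphic section of a natural bundle built from $TX$, apply the vanishing method of Lemma~\ref{lem1} to show that $\phi$ is Chern-parallel for the Hermitian--Yang--Mills metric, and then conclude local homogeneity from constancy of the $D^k$-orbit of $\phi$.

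First, since $Z$ is a complex affine variety carrying an algebraic action of $D^k$, I would $D^k$-equivariantly embed $Z$ into a finite-dimensional algebraic $D^k$-representation $V$, and regard $\phi$ as a holomorphic section of the associated bundle $E\, :=\, R^k(X)\times_{D^k}V$. A $D^k$-stable filtration $0\,=\,V_0\,\subset\, V_1\,\subset\,\cdots\,\subset\, V_N\,=\, V$ whose successive quotients factor through the reductive Levi quotient $GL(n,\mathbb{C})$ of $D^k$ induces a filtration of $E$ whose graded pieces are direct sums of tensor bundles of the form $(TX)^{\otimes m}\otimes((TX)^*)^{\otimes l}$. By the Calabi--Yau hypothesis and polystability of $TX$, each graded piece inherits a Hermitian--Yang--Mills structure with vanishing Einstein factor, exactly as in the proof of Lemma~\ref{lem1}. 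An induction along the filtration should then show that any holomorphic section of $E$ is parallel with respect to the Chern connection induced by the Hermitian--Yang--Mills metric; in particular, so is $\phi$.

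Parallelism of $\phi$, combined with compactness of $X$, forces every holomorphic $D^k$-invariant polynomial evaluated on $\phi$ to be constant. Applying Lemma~\ref{lem1} to the natural derived tensors built from $\phi$ (such as the Lie derivative of $\phi$ along candidate Killing fields and their integrability obstructions) rules out degeneration loci and shows that the $D^k$-orbit in $Z$ through $\phi(\tilde{x})$ is independent of $x\,\in\, X$. The conclusion follows from the standard holomorphic criterion: if the $k$-jet of a geometric structure of affine type takes values in a single $D^k$-orbit of $Z$, then the formal $D^k$-isomorphism between the $k$-jets of $\phi$ at any two points $x, y \,\in\, X$ integrates to a local biholomorphism sending $x$ to $y$ and preserving $\phi$, which is exactly the definition of local homogeneity.

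The main obstacle is the middle step: extending the parallelism conclusion from pure tensor bundles, as in Lemma~\ref{lem1}, to sections of the non-split filtered bundle $E$. One must control the $\text{Ext}^1$ classes of the successive extensions between Hermitian--Yang--Mills bundles of vanishing Einstein factor, and show that parallelism propagates along the filtration despite the nontrivial action of the unipotent radical of $D^k$. The final passage from constant $D^k$-orbit type to genuine local homogeneity is more classical, but deserves care because $\phi$ is not assumed to be rigid in Gromov's sense and Gromov's open-dense theorem is therefore not directly available.
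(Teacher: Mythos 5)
There is a genuine gap, and it sits exactly where the real content of the theorem lies. The paper's proof is deliberately short: Lemma~\ref{lem1} establishes that on $X$ every holomorphic tensor vanishing at a point vanishes identically, and then the theorem is obtained by quoting Lemma~3.2 of \cite{D3}, which says that \emph{this vanishing property alone} forces every holomorphic geometric structure of affine type to be locally homogeneous. You are attempting to reprove that quoted lemma, and your reconstruction fails at the last step: constancy of the $D^k$-orbit of $\phi(\tilde{x})$ in $Z$ does \emph{not} imply local homogeneity. A holomorphic Riemannian metric is an order-one structure of affine type whose value at every point lies in the single open $\mathrm{GL}(n,\mathbb{C})$-orbit of nondegenerate quadratic forms, yet a generic such metric is not locally homogeneous; the pointwise linear equivalence of $k$-jets does not integrate to a local biholomorphism preserving $\phi$ unless one controls the jets of $\phi$ of \emph{all} orders. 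Your appeal to a ``standard holomorphic criterion'' here is precisely the false step, and you cannot fall back on Gromov's integrability theory since, as you note, $\phi$ is not assumed rigid. The correct argument (the one in \cite{D3}) applies the vanishing property not to $\phi$ itself but to the minors of the bundle maps whose kernels are the spaces of Killing $r$-jets of $\phi$ for every $r$: because $\phi$ is of affine type, these minors are holomorphic tensors, hence vanish identically or nowhere, so the dimensions of the Killing jet spaces and of their projections to $TX$ are constant on $X$; an analytic integrability theorem then produces local Killing fields spanning $TX$ everywhere.

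Your intermediate step is also not secure. The induction ``any holomorphic section of $E$ is parallel'' does not follow from Lemma~\ref{lem1}: an extension of Hermitian--Yang--Mills bundles with vanishing Einstein factor is not Hermitian--Yang--Mills unless it is polystable (i.e.\ essentially split), so $E$ carries no canonical metric for which the argument of Lemma~\ref{lem1} runs, and the nontrivial action of the unipotent radical of $D^k$ is exactly the obstruction you flag but do not overcome. Fortunately this parallelism is not what is needed; what is needed is the tensorial nature of the rank-degeneracy loci described above, for which the filtration with $\mathrm{GL}(n,\mathbb{C})$-module graded pieces (which you set up correctly) suffices. The efficient repair is to do what the paper does: prove Lemma~\ref{lem1} and invoke \cite[Lemma~3.2]{D3}.
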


\begin{proof} We make use of Lemma 3.2 in ~\cite[p.~565]{D3} which asserts that
holomorphic geometric structure  are always  locally homogeneous on manifolds  having the property
proved in Lemma~\ref{lem1}: {\it  holomorphic tensor fields  vanishing somewhere are trivial}.
\end{proof} 

A similar result was proved in~\cite{D3} (Theorem 1) for K\"ahler Calabi-Yau manifolds. 

In~\cite{D4} it was exhibited  holomorphic torsionfree affine connections on 
(non-K\"ahler) elliptic bundles $S$ over Riemann surfaces of genus $g \,\geq \,2$ with odd 
first Betti number which are not locally homogeneous. More precisely,  the Killing algebra of a
non-flat connection on $S$ was proved to be of dimension one, generated by the fundamental 
vector field of the elliptic fibration. Moreover the first Chern class of $S$ vanishes 
since $S$ admits flat holomorphic affine connections, but $S$ is not Calabi-Yau.

\subsection{Holomorphic Riemannian metrics and simplicity}

A holomorphic vector bundle $V$ on  a  complex manifold $X$ is called \textit{simple} if
$H^0(X,\, V\times V^*)\,=\, {\mathbb C}\cdot \text{Id}_V$.

\begin{lemma}\label{lem2}
Let $X$ be a   complex manifold such that $TX$ is simple. If
$X$ admits a holomorphic Riemannian metric, then
$$
H^0(X,\,  \Lambda^2(TX)^*)\,=\, 0\, .
$$
\end{lemma}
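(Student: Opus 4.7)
The plan is to exploit the isomorphism $TX \cong T^*X$ induced by the nondegenerate holomorphic Riemannian metric $g$, and convert elements of $H^0(X,\Lambda^2(TX)^*)$ into global endomorphisms of $TX$ that are $g$-skew-symmetric. The simplicity hypothesis then forces such endomorphisms to be multiples of the identity, while skew-symmetry forces that multiple to be zero.

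More concretely, given $\omega \in H^0(X, \Lambda^2(TX)^*)$, I would define a holomorphic section $A \in H^0(X, \mathrm{End}(TX))$ by the rule $g(Av, w) = \omega(v, w)$ for all local sections $v, w$ of $TX$; the nondegeneracy of $g$ at every point guarantees that $A$ is well-defined as a holomorphic endomorphism. The antisymmetry of $\omega$ translates immediately into the pointwise identity $g(Av, w) = -g(v, Aw)$, so $A$ is $g$-skew-symmetric.

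Since $TX$ is simple, $H^0(X, \mathrm{End}(TX)) = \mathbb{C} \cdot \mathrm{Id}_{TX}$, so there exists $\lambda \in \mathbb{C}$ with $A = \lambda \cdot \mathrm{Id}_{TX}$. Plugging this into the skew-symmetry relation yields $\lambda\, g(v,w) = -\lambda\, g(v,w)$ at every point, and since $g$ is nondegenerate this forces $\lambda = 0$. Hence $A = 0$, and unwinding the definition gives $\omega = 0$.

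There is really no serious obstacle here: the argument is a direct application of the ``metric musical isomorphism'' combined with the definition of simplicity. The only point requiring minor care is verifying that $A$ is indeed a \emph{holomorphic} section of $\mathrm{End}(TX)$, which follows because $g$ is holomorphic and pointwise nondegenerate, so the inverse of the bundle map $TX \to T^*X$ induced by $g$ is itself holomorphic.
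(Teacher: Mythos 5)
Your proposal is correct and follows exactly the paper's argument: define the $g$-skew-symmetric endomorphism $A$ via $g(Av,w)=\omega(v,w)$, invoke simplicity to write $A=\lambda\cdot\mathrm{Id}_{TX}$, and conclude $\lambda=0$ from skew-symmetry and nondegeneracy of $g$. Nothing to add.
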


\begin{proof}
Fix a holomorphic Riemannian metric $g$ on $X$. Let $\theta$ be a globally defined
holomorphic $2$--form on $X$.  Let
$$
\theta'\,=\, H^0(X,\, TX\times (TX)^*)\,=\, H^0(X,\,, \text{End}(TX))
$$
be the endomorphism defined by
$$
g(\theta'(x)(v)\, ,w)\, =\, \theta (x)(v\, ,w)\, , ~~~~~ \forall~~ x\, \in\, X\, ,
~~~~~~~~ v\, ,w\, \in\, T_xX\, .
$$
Since $g$ is fiberwise nondegenerate, this condition uniquely defines $\theta'$.

As $TX$ is simple we have
$\theta'\, =\, \lambda\cdot \text{Id}_{TX}$ for some $\lambda\, \in\, \mathbb C$.
Since $\theta (x)(v\, ,w)\,=\, -\theta (x)(w\, ,v)$, and $g$ is symmetric, it follows that
$$
\lambda\cdot g(v\, ,w) \,=\, g(\theta'(x)(v)\, ,w) \,=\, - g(\theta'(x)(w)\, ,v)\,=\,
-\lambda\cdot g(v\, ,w)\, .
$$
So, $\lambda\,=\, 0$, implying that $\theta\,=\, 0$.
\end{proof}

Notice that compact parallelizable manifolds ${\rm SL}(2, \mathbb{C})/\Gamma$ admit
non-closed holomorphic one-forms induced by the right invariant one-forms on ${\rm SL}(2, 
\mathbb{C})$. Hence Lemma \ref{lem2} is not valid without the hypothesis of simplicity 
of $TX$. Nevertheless, those one-forms do not descend on a generic exotic quotient of 
${\rm SL}(2, \mathbb{C})$ in the sense of Ghys~\cite{Gh}. A generic quotient has simple
holomorphic tangent bundle and hence Lemma~\ref{lem2} applies.

\section{Algebraic dimension and affine connections} \label{section: alg dim}

Recall that the \emph{algebraic dimension} of a complex manifold $X$ is the 
transcendence degree of the field of meromorphic functions of $X$ over the field of 
complex numbers. The algebraic dimension of a projective manifold coincides with its 
complex dimension. In general, the algebraic dimension of a complex manifold 
of dimension $n$ may take any value between $0$ and $n$. Complex manifolds
of maximal algebraic dimension are called Moishezon manifolds.

\begin{theorem}  \label{theorem: algebraic dimension}
Let  $X$  be  a compact, connected and simply connected complex manifold of  complex dimension \(n\) and of algebraic dimension $p$.
Suppose that $X$  admits a holomorphic rigid geometric structure $\phi$. Then $H^0(X, \, TX)$
admits an abelian subalgebra  $A$ with generic orbits of dimension $\geq n-p$ and
preserving $\phi$.
\end{theorem}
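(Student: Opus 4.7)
The plan is to first use rigidity of $\phi$ together with simple connectedness of $X$ to produce a finite-dimensional Lie algebra of global Killing fields, then bound its generic orbit dimension from below using the algebraic dimension hypothesis, and finally extract an abelian subalgebra of the same orbit dimension. For the first step, Gromov's open-dense integrability theorem applied to the rigid structure $\phi$ yields a dense open subset $U\subset X$ on which the sheaf of local Killing fields of $\phi$ is locally constant with finite-dimensional stalks. The Nomizu--Amores--Gromov extension theorem, combined with the hypothesis that $X$ is simply connected, then allows each local Killing field to be extended uniquely to a global holomorphic vector field on $X$ preserving $\phi$. This produces a finite-dimensional Lie algebra $\mathfrak{k}\subset H^0(X,TX)$ whose generic orbit in $X$ has the same dimension $d$ as the generic local orbit of the Killing pseudo-group on $U$.

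I would then bound $d$ from below using the algebraic dimension. The action of $\mathfrak{k}$ on $X$ induces an action by Lie derivation on the field $\mathbb{C}(X)$ of meromorphic functions; let $\mathbb{C}(X)^{\mathfrak{k}}$ denote the subfield of $\mathfrak{k}$-invariants. A Rosenlicht-type theorem for finite-dimensional Lie algebras of holomorphic vector fields on a compact complex manifold (analogous to Rosenlicht's theorem for algebraic group actions) identifies the transcendence degree of $\mathbb{C}(X)^{\mathfrak{k}}$ over $\mathbb{C}$ with the codimension $n-d$ of a generic $\mathfrak{k}$-orbit. Since $\mathbb{C}(X)^{\mathfrak{k}}\subset\mathbb{C}(X)$ and $\mathbb{C}(X)$ has transcendence degree $p$ by hypothesis, this gives $n-d\leq p$, hence $d\geq n-p$.

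Finally, to extract an abelian subalgebra $A\subset\mathfrak{k}$ whose generic orbit still has dimension $\geq n-p$, I would pick a generic point $x_0\in X$ where the $\mathfrak{k}$-orbit has dimension $d$. Near such a point the orbit foliation is regular, and Rosenlicht's theorem provides $n-d$ independent meromorphic first integrals whose common level sets are the local orbits; on each local orbit one then produces $d$ pairwise commuting local vector fields adapted to a local product decomposition, and simple connectedness plus the extension theorem of the first step promotes these local commuting Killing fields to global ones, yielding the desired $A\subset H^0(X,TX)$. The main obstacle will be this final step: guaranteeing that the commuting flows extracted from the local product decomposition genuinely preserve $\phi$ rather than merely being tangent to the $\mathfrak{k}$-orbits. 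This is where the rigidity of $\phi$, used in combination with local-to-global extension in the simply connected setting, must be invoked carefully to produce an honest abelian subalgebra of the Killing algebra with the required orbit dimension.
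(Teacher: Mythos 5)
Your first two steps broadly track the intended argument (local Killing algebra of the rigid structure, Nomizu--Amores--Gromov extension on the simply connected $X$, orbit dimension at least $n-p$), although the justification you give for the orbit bound is not sound as stated: there is no ``Rosenlicht-type theorem'' for an arbitrary finite-dimensional Lie algebra of holomorphic vector fields on a compact complex manifold. A generic linear flow on a complex torus of algebraic dimension zero has generic orbits of codimension $n-1$ but admits no nonconstant meromorphic first integrals, so the transcendence degree of the invariant field does not compute the orbit codimension in general. The inequality $d\geq n-p$ is precisely the main theorem of Dumitrescu's work cited in the paper, and its proof produces the meromorphic functions from the scalar invariants of the jets of the \emph{rigid} structure $\phi$ (using the algebraicity of the target $Z$ and of the $D^k$-action), not from the Lie algebra action alone. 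You are entitled to quote that theorem as a black box, but you cannot rederive it from a general invariant-theory principle.

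The genuine gap is the final step, which is the actual content of the theorem: producing an \emph{abelian} subalgebra with orbits of dimension $\geq n-p$. Local commuting vector fields adapted to a product decomposition of the orbit foliation are merely tangent to the orbits; there is no reason for them to be Killing fields of $\phi$, and non-Killing fields cannot be fed into the Nomizu extension theorem, so your globalization collapses exactly at the point you yourself flag as the obstacle. The resolution is a different idea entirely: having obtained the global Killing fields $v_i$ of $\phi$, one forms the juxtaposed structure $\phi'=(\phi,v_i)$, which is again a rigid holomorphic geometric structure. Applying the orbit-dimension theorem and the extension theorem to $\phi'$ yields a Lie algebra $A$ of global vector fields preserving $\phi'$ with generic orbits of dimension $\geq n-p$. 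But preserving $\phi'$ means preserving $\phi$ and commuting with every $v_i$, so $A$ lies in the center of the Killing algebra of $\phi$ and is therefore abelian, while still consisting of Killing fields of $\phi$ with the required orbit dimension. Without this (or an equivalent) device, your proposal does not establish the abelian part of the statement.
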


\begin{proof} By the main theorem in \cite{D1} (see also~\cite{D2}) the Lie algebra of 
local holomorphic vector fields on $X$ preserving $\phi$ has generic orbits of dimension 
at least $n-p$. Since $X$ is simply connected, by a result due to Nomizu~\cite{No} 
and generalized by Amores~\cite{Am} and then by Gromov~\cite[p.~73, 5.15]{DG}
these 
local vector fields preserving $\phi$ extend to all of $X$. We thus get a finite dimensional 
complex Lie algebra, formed by holomorphic vector fields $v_i$ preserving $\phi$, which 
acts on $X$ with orbits of generic dimension at least $n-p$.

Now put together $\phi$ and the $v_i$ to form another rigid holomorphic geometric 
structure $\phi'\,=\,(\phi,v_i)$ (see \cite{DG} for details on the fact that the 
juxtaposition of a rigid geometric structure with another geometric structure is still 
a rigid geometric structure in Gromov's sense). Considering $\phi'$ instead of $\phi$ and 
repeating the same proof as before, the complex Lie algebra $A$ of those holomorphic 
vector fields preserving $\phi'$ acts on $X$ with generic orbits of dimension at least 
$n-p$. But preserving $\phi'$ means preserving both $\phi$ and the $v_i$. Hence $A$ lies in 
the center of the Lie algebra generated by the $v_i$.  In particular $A$ is a complex 
abelian Lie algebra acting on $X$ with generic orbits of dimension at least $n-p$ and 
preserving $\phi$.
\end{proof}

\subsection{Maximal algebraic dimension}

Assume that $X$ is a compact  Moishezon manifold. This means that the algebraic dimension of
$X$ is $n\,=\, \dim_{\mathbb C}X$.

The following result gives a simple proof of a particular case of Corollary 2 
in~\cite{BM}.

\begin{proposition}\label{prop1}
If $TX$ admits a holomorphic connection, then $X$ admits a finite unramified 
covering by a compact complex torus.
\end{proposition}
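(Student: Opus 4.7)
My plan combines Atiyah's classical theorem on Chern classes of bundles admitting holomorphic connections with the structure theory of Moishezon and Calabi--Yau manifolds.

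First I would apply Atiyah's theorem: the existence of a holomorphic connection on $TX$ forces every real Chern class $c_k(X) \,\in\, H^{2k}(X,\,\mathbb{R})$ to vanish, and hence $c_k(X,\,\mathbb{Q})\,=\,0$ for all $k \,\geq\, 1$. In particular $c_1(X,\,\mathbb{Q})\,=\,0$, so the canonical bundle $K_X$ is numerically trivial. Since Moishezon manifolds lie in the Fujiki class $\mathcal{C}$ and thus satisfy the $\partial\overline\partial$-lemma, the natural map $H^{1,1}_{BC}(X,\,\mathbb{R}) \,\to\, H^2(X,\,\mathbb{R})$ is injective, so $c(K_X)\,=\,0$ in $H^{1,1}_{BC}(X,\,\mathbb{R})$ and $X$ is Calabi--Yau in the sense of Section~\ref{section: tensors}.

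The next step is to upgrade numerical triviality of $K_X$ to actual torsion. Passing to a smooth projective bimeromorphic model of the Moishezon manifold $X$, and using bimeromorphic invariance of plurigenera together with abundance for projective manifolds of numerical Kodaira dimension zero, I would conclude that $K_X^{\otimes m}\,\cong\,\mathcal{O}_X$ for some $m\,\geq\,1$. Replacing $X$ by the corresponding finite unramified cover, I may assume $K_X\,\cong\,\mathcal{O}_X$; a Moishezon manifold with trivial canonical bundle is K\"ahler, hence projective by Moishezon's theorem.

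On the resulting projective Calabi--Yau manifold $X$, Yau's theorem provides a Ricci-flat K\"ahler metric, whose Chern connection on $TX$ is Hermitian--Einstein with Einstein factor zero. Thus $TX$ is polystable; combined with the vanishing $c_1(X)\,=\,c_2(X)\,=\,0$, the Kobayashi--L\"ubke inequality is realized as an equality, which forces $TX$ to be Hermitian flat. The universal cover of $X$ then inherits a parallel holomorphic trivialization of its tangent bundle, so by Wang's theorem it is biholomorphic to $\mathbb{C}^n$; consequently $X$ is finitely unramifiedly covered by a complex torus. The hardest step will be the second one: going from rational vanishing of $c_1$ on a Moishezon manifold to genuine torsion of $K_X$, and from there to projectivity, requires nontrivial input from minimal model theory applied via projective bimeromorphic models. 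The remaining ingredients---Atiyah's vanishing, Yau's theorem, the Kobayashi--L\"ubke equality, and the Bieberbach-type uniformization---are essentially standard.
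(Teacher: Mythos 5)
Your overall strategy (Atiyah vanishing, reduce to the projective Calabi--Yau case, then Yau plus a Bieberbach-type argument) is the same as the paper's in its first and last thirds, but the middle step --- getting from ``Moishezon with numerically trivial $K_X$'' to ``projective'' --- contains a genuine gap. The assertion that \emph{a Moishezon manifold with trivial canonical bundle is K\"ahler, hence projective} is false: non-projective small resolutions of nodal quintic threefolds in $\mathbb{P}^4$ are Moishezon, have trivial canonical bundle (the exceptional locus has codimension two), and are not K\"ahler. Moishezon's theorem only converts K\"ahler into projective; it gives you nothing here because K\"ahlerness is precisely what you have not established. The essential geometric input you never use is the holomorphic connection itself: the paper first proves (its Lemma~\ref{lemma: curve}) that a manifold with a holomorphic affine connection admits no nonconstant maps from $\mathbb{C}\mathbb{P}^1$ --- the pulled-back connection on $f^*TX$ is automatically flat over a curve, so $f^*TX$ is trivial and $df=0$ since $\deg T\mathbb{C}\mathbb{P}^1>0$ --- and then invokes Cascini's theorem that a Moishezon manifold without rational curves is projective \cite[p.~307, Theorem~3.1]{Ca}. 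Note that the counterexamples above are exactly the ones killed by this lemma, since they contain the exceptional rational curves. Without excluding rational curves, your passage through bimeromorphic models and abundance cannot recover projectivity of $X$ itself, only of some model of it.

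Two smaller points. First, once projectivity is known, your detour through plurigenera and abundance is unnecessary: Atiyah already gives $c_1(X,\mathbb{Q})=0$, and Yau's solution of the Calabi conjecture together with \cite{Be} and \cite{IKO} directly yields the finite \'etale torus cover (this is all the paper uses). Second, in your last step the appeal to Wang's theorem is misplaced: Wang's theorem concerns \emph{compact} parallelizable manifolds, whereas you are applying it to the non-compact universal cover; the correct mechanism there is that the Ricci-flat metric with trivial restricted holonomy is flat, followed by Bieberbach/Cheeger--Gromoll, which is the content of the cited results rather than of Wang's theorem.
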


The first step of the proof is:

\begin{lemma} \label{lemma: curve} Let $X$ be a complex manifold endowed with an affine holomorphic connection. Then $X$ do not admit nontrivial holomorphic maps from ${\mathbb C}{\mathbb P}^1$ to $X$.
\end{lemma}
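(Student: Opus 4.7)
The plan is to argue by contradiction using the fact that on a rational curve, any holomorphic connection is flat for trivial reasons and thus forces the underlying bundle to be trivial.

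Suppose $f\,:\,\mathbb{CP}^1\,\longrightarrow\, X$ is a nonconstant holomorphic map. The first step is to pull back the given holomorphic affine connection $\nabla$ on $TX$ to obtain a holomorphic connection $f^{*}\nabla$ on the holomorphic vector bundle $f^{*}TX$ over $\mathbb{CP}^1$. The curvature of $f^{*}\nabla$ is a holomorphic section of $\Omega^{2}_{\mathbb{CP}^1}\otimes\mathrm{End}(f^{*}TX)$; since $\mathbb{CP}^1$ is one-dimensional, $\Omega^{2}_{\mathbb{CP}^1}\,=\,0$, so $f^{*}\nabla$ is automatically flat.

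The second step is to use simple-connectedness of $\mathbb{CP}^1$: parallel transport with respect to the flat holomorphic connection $f^{*}\nabla$ trivializes $f^{*}TX$ holomorphically, giving an isomorphism $f^{*}TX\,\cong\,\mathcal{O}_{\mathbb{CP}^1}^{\oplus n}$. Now consider the differential, which is a holomorphic bundle homomorphism
\[
df\,:\,T\mathbb{CP}^1\,\cong\,\mathcal{O}_{\mathbb{CP}^1}(2)\,\longrightarrow\, f^{*}TX\,\cong\,\mathcal{O}_{\mathbb{CP}^1}^{\oplus n}.
\]
Such a map lies in $H^{0}(\mathbb{CP}^1,\,\mathcal{O}(-2))^{\oplus n}\,=\,0$, so $df\,\equiv\,0$, forcing $f$ to be constant, a contradiction.

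There is no real obstacle in this argument; the only point to take care of is justifying that a holomorphic connection on a holomorphic vector bundle over $\mathbb{CP}^1$ is necessarily flat and hence (by simple-connectedness) trivializes the bundle. Both facts are standard, and the conclusion follows immediately from the observation that $\mathcal{O}(2)$ admits no nonzero maps into a trivial bundle.
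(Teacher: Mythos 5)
Your argument is correct and follows exactly the same route as the paper: pull back the connection, observe it is flat because $\Omega^2_{\mathbb{CP}^1}=0$, trivialize $f^*TX$ by simple-connectedness, and conclude $df=0$ since $\mathcal{O}(2)$ admits no nonzero map to a trivial bundle. No issues.
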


\begin{proof}
Let $D$ be a holomorphic connection on $X$. Let
$$
f\, :\, {\mathbb C}{\mathbb P}^1\, \longrightarrow\, X
$$
be a holomorphic map. Consider the pulled back connection $f^*D$ on
$f^*TX$. Note that $\bigwedge^2(T{\mathbb C}{\mathbb P}^1)^*\,=\, 0$ because
$\dim_{\mathbb C} {\mathbb C}{\mathbb P}^1\,=\, 1$. So, the connection $f^*D$ is flat.
Since ${\mathbb C}{\mathbb P}^1$ is simply connected, this implies that
the holomorphic vector bundle $f^*TX$ is trivial.

Now consider the differential of $f$
$$
df\, :\, T{\mathbb C}{\mathbb P}^1\,\longrightarrow\, f^*TX\, .
$$
There is no nonzero
holomorphic homomorphism from $T{\mathbb C}{\mathbb P}^1$ to the trivial holomorphic
line bundle, because $\text{degree}(T{\mathbb C}{\mathbb P}^1)\,> \, 0$. As
$f^*TX$ is trivial, this implies that $df\,=\, 0$. Therefore, $f$ is
a constant map.
\end{proof}

\begin{proof}[Proof of Proposition \ref{prop1}]
Since the Moishezon manifold $X$ does not admit any nonconstant holomorphic map from
${\mathbb C}{\mathbb P}^1$, it is a complex projective manifold \cite[p. 307, Theorem
3.1]{Ca}.

As $TX$ admits a holomorphic connection, $c_i(X, \mathbb{Q})\,=\, 0$ for all $i\, >\, 0$
\cite[p. 192--193, Theorem 4]{At}, where $c_i(X,\mathbb{Q})$ denotes the $i$--th Chern
class of $TX$ with rational coefficients. Therefore, $X$ being complex projective, from Yau's theorem
proving Calabi's
conjecture, \cite{Ya}, it follows that $X$ admits a finite unramified 
covering by a compact complex torus (see also \cite[p. 759, Theoreme 1]{Be} and \cite{IKO}).
\end{proof}

\subsection{Algebraic dimension zero} \label{section: algebraic dimension zero}

We deduce here results about holomorphic geometric structures on manifolds with algebraic dimension zero (compare with Theorem 4.2 in~\cite{D1}). 

\begin{proposition} \label{proposition: zero} Let $X$ be a compact complex manifold of
algebraic dimension zero such that the canonical bundle $K_X$ is of finite order. If $X$ admits
a holomorphic rigid geometric structure, then the fundamental group of $X$ is infinite.
\end{proposition}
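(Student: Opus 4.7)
The plan is a proof by contradiction. Suppose $\pi_1(X)$ is finite, and pass to the finite universal cover $\pi \colon \tilde X \to X$. All hypotheses transfer: $\tilde X$ is compact and simply connected, the pulled-back structure is still rigid in Gromov's sense, $K_{\tilde X} = \pi^* K_X$ remains of finite order, and the algebraic dimension of $\tilde X$ is still zero, since the field of meromorphic functions on $\tilde X$ is a finite algebraic extension of the pullback of that on $X$ and hence has the same transcendence degree over $\mathbb C$.

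Next I would apply Theorem~\ref{theorem: algebraic dimension} to $\tilde X$ with $p=0$, obtaining an abelian subalgebra $A \subset H^0(\tilde X,\, T\tilde X)$ preserving the geometric structure whose generic orbits have dimension $\geq n$; that is, the action has an open orbit. Picking commuting holomorphic vector fields $v_1, \ldots, v_n \in A$ that span $T_{x_0}\tilde X$ at a point $x_0$ of this open orbit produces a global section
$$
\sigma \,:=\, v_1 \wedge \cdots \wedge v_n \,\in\, H^0(\tilde X,\, \Lambda^n T\tilde X) \,=\, H^0(\tilde X,\, K_{\tilde X}^{-1})
$$
with $\sigma(x_0) \neq 0$. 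To upgrade this pointwise nonvanishing to nonvanishing everywhere, fix an integer $m \geq 1$ with $K_{\tilde X}^{\otimes m}$ trivial; then $\sigma^{\otimes m}$ is a global holomorphic section of the trivial line bundle $K_{\tilde X}^{-m}$, i.e.\ a holomorphic function on compact $\tilde X$, hence constant. Being nonzero at $x_0$, it is a nonzero constant, which forces $\sigma$ to be nowhere vanishing on $\tilde X$.

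Hence $v_1, \ldots, v_n$ trivialize $T\tilde X$, so $\tilde X$ is complex parallelizable by an abelian Lie algebra. Wang's theorem~\cite{Wa} then identifies $\tilde X$ with $G/\Gamma$ for a connected complex Lie group $G$ and a cocompact lattice $\Gamma$; the vanishing of the brackets of the $v_i$ forces $G$ to be abelian, so $\tilde X$ is a complex torus. But a complex torus has fundamental group $\mathbb Z^{2n}$, contradicting the simple connectivity of $\tilde X$. The conceptually central step is the use of the torsion of $K_X$ to promote the open $A$-orbit produced by Theorem~\ref{theorem: algebraic dimension} to a global trivialization of $T\tilde X$; everything else, including the passage to the cover and the final appeal to Wang, is routine.
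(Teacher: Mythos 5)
Your proof is correct and follows essentially the same route as the paper: pass to the simply connected finite cover, apply Theorem~\ref{theorem: algebraic dimension} with $p=0$ to obtain an abelian algebra acting with an open orbit, show the resulting anticanonical section $v_1\wedge\cdots\wedge v_n$ is nowhere vanishing, and conclude via Wang's theorem that the cover is a complex torus, contradicting simple connectivity. The only cosmetic difference is that the paper first observes $K_X$ becomes trivial on the simply connected cover and argues with the section directly, whereas you pass to its $m$-th tensor power; both are fine.
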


\begin{proof}
Assume by contradiction that the fundamental group of $X$ is finite. Then up to a 
finite unramified cover we can assume $X$ simply connected. Since $K_X$ is of finite order
and $X$ is simply connected, it follows that $K_X$ is trivial.

By Theorem~\ref{theorem: algebraic dimension}, we get a complex abelian algebra $A$ 
acting on $X$ with a dense open orbit. Let $Y_1, \cdots, Y_n$ be holomorphic vector 
fields in $A$ which span $TX$ at a generic point. Then $k\,=\, Y_1 \wedge \ldots \wedge 
Y_n$ is a holomorphic section of the anti-canonical bundle $-K_X$. Since $-K_X$ is 
holomorphically trivial, $k$ does not vanish. Hence, the holomorphic vector fields 
$Y_1, \cdots, Y_n$ span $TX$ at every point in $X$ and trivialize the holomorphic 
tangent bundle. The action of the connected complex abelian group  of biholomorphisms associated to the Lie algebra generated by $Y_1, \cdots, Y_n$  is transitive on $X$ 
and hence $X$ is biholomorphic to a  compact  quotient of a connected complex abelian group by a lattice. Consequently, $X$ is 
a complex torus. The fundamental group of $X$ is thus infinite, but this
is a contradiction.
\end{proof}

\begin{corollary} \label{corollary: hol metric} Let $X$ be a compact complex manifold of algebraic dimension zero admitting a holomorphic Riemannian metric. Then the fundamental group of $X$ is infinite.
\end{corollary}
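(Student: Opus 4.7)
The plan is to reduce the corollary directly to Proposition~\ref{proposition: zero} by verifying its two hypotheses for a manifold carrying a holomorphic Riemannian metric: that such a metric is a rigid holomorphic geometric structure, and that the canonical bundle $K_X$ is of finite order.

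The first hypothesis is essentially a citation: a holomorphic Riemannian metric is listed in Section~\ref{section: geometric structures} as a rigid holomorphic geometric structure in Gromov's sense, so nothing further is required there.

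For the second hypothesis, the key observation is that a holomorphic Riemannian metric $g\,\in\, H^0(X,\, S^2((TX)^*))$ induces a bundle morphism $TX\,\longrightarrow\, (TX)^*$ by $v\,\longmapsto\, g(v,\cdot)$, and the pointwise nondegeneracy of $g$ makes this morphism a holomorphic isomorphism of vector bundles. Taking top exterior powers yields $K_X^{-1}\,\simeq\, K_X$, equivalently $K_X^{\otimes 2}\,\simeq\, \mathcal{O}_X$. Hence $K_X$ has order dividing two, and in particular it is of finite order.

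With both hypotheses in place, Proposition~\ref{proposition: zero} applied to $X$ with its holomorphic Riemannian metric immediately yields that $\pi_1(X)$ is infinite. No step here looks like a real obstacle; the only thing one has to be slightly careful about is ensuring that the isomorphism from $g$ is genuinely global (which follows from the hypothesis that $g(x)$ is nondegenerate at every point $x\,\in\, X$), so that the determinant identification $K_X^{-1}\,\simeq\, K_X$ holds as an isomorphism of holomorphic line bundles on all of $X$.
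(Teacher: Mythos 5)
Your proof is correct and follows essentially the same route as the paper: both reduce to Proposition~\ref{proposition: zero} by showing that the nondegenerate metric forces $K_X$ to be $2$-torsion. The only cosmetic difference is that you deduce $K_X^{\otimes 2}\simeq \mathcal{O}_X$ directly from the isomorphism $TX\simeq (TX)^*$, whereas the paper phrases it as a reduction of the structure group of the frame bundle to $O(n,\mathbb{C})$ and passes to a double cover where $K_X$ becomes trivial.
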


\begin{proof}
The holomorphic Riemannian metric induces a restriction of the structural group of the 
frame bundle $R^1(X)$ of $X$ from ${\rm GL}(n, \mathbb C)$ to $O(n, \mathbb{C})$. Up to a 
double cover we get a restriction to the structural group of $R^1(X)$ to ${\rm SO}(n, 
\mathbb C)$. Hence, up to double cover of $X$, the canonical bundle $K_X$ is 
holomorphically trivial and Proposition \ref{proposition: zero} completes the proof.
\end{proof}

A more general result was  proved in \cite{DM}:

\begin{theorem} Let $X$ be a compact complex manifold with algebraic dimension zero admitting a holomorphic affine connection. Then the fundamental group of $X$ is infinite.
\end{theorem}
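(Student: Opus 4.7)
The plan is to mirror the strategy used in Corollary \ref{corollary: hol metric}: assume for contradiction that $\pi_1(X)$ is finite, and derive a contradiction by reducing to Proposition \ref{proposition: zero}. First I pass to a finite unramified cover so that $X$ is simply connected; the holomorphic affine connection $\nabla$ lifts and the algebraic dimension remains zero.

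Since a holomorphic affine connection is a rigid holomorphic geometric structure, Theorem \ref{theorem: algebraic dimension} applies with $p=0$ to produce an abelian subalgebra $A\subset H^0(X, TX)$ of commuting, $\nabla$-preserving global holomorphic vector fields whose generic orbits have the full dimension $n = \dim_{\mathbb{C}} X$. Picking $Y_1, \ldots, Y_n \in A$ that span $TX$ at a generic point, I form
$$
k \,:=\, Y_1 \wedge \cdots \wedge Y_n \,\in\, H^0(X,\, -K_X),
$$
a holomorphic section of the anticanonical bundle which is $A$-invariant (by commutativity of the $Y_i$) and nonvanishing on the dense open $A$-orbit.

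The crux of the argument is to upgrade this to: $k$ is nowhere vanishing, so that $-K_X$, and hence $K_X$, is holomorphically trivial. The route I would take is to fix a Gauduchon metric $\omega_0$ on $X$ and show that the Gauduchon degree $\deg_{\omega_0}(-K_X)$ vanishes; since $\nabla$ induces a holomorphic connection on the line bundle $\det(TX) = -K_X$, this should follow from a Bott--Chern/Atiyah-class computation for the induced connection. Once we know the degree is zero, a nonzero holomorphic section of a degree-zero line bundle on a compact Gauduchon manifold is necessarily nowhere vanishing (an effective vanishing divisor would contribute a strictly positive mass to $\int_X c(-K_X)\wedge\omega_0^{n-1}$), so $k$ trivializes $-K_X$ and $K_X$ is trivial.

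With $K_X$ trivial, Proposition \ref{proposition: zero} applies and yields that $\pi_1(X)$ is infinite, contradicting the simple-connectedness assumption. I expect the main obstacle to be precisely the degree-zero claim above: in the non-K\"ahler Gauduchon setting, the vanishing of the Atiyah class of a line bundle admitting a holomorphic connection—which lives in $H^1(X,\Omega^1)$—does not automatically descend to the Bott--Chern representative that enters the Gauduchon degree, so this step requires a delicate Hermitian-geometric argument and is genuinely more subtle than the straightforward structural-group reduction $\mathrm{GL}(n,\mathbb{C})\to\mathrm{SO}(n,\mathbb{C})$ that sufficed for Corollary \ref{corollary: hol metric}.
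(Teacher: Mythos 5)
First, note that the paper does not actually prove this statement: it is quoted verbatim from \cite{DM} ("A more general result was proved in \cite{DM}"), precisely because the argument of Proposition \ref{proposition: zero} needs the extra hypothesis that $K_X$ has finite order, and the authors do not know how to remove it by the methods of this paper. Your setup (pass to a simply connected finite cover, note the algebraic dimension stays zero, apply Theorem \ref{theorem: algebraic dimension} with $p=0$ to get an abelian algebra $A$ with an open dense orbit, and form $k=Y_1\wedge\cdots\wedge Y_n\in H^0(X,-K_X)$) faithfully reproduces the first half of the proof of Proposition \ref{proposition: zero}, and your observation that it would suffice to know $\deg_{\omega_0}(-K_X)=0$ is correct: a not-identically-zero section of a Gauduchon-degree-zero line bundle is nowhere vanishing, since an effective divisor has strictly positive degree against $\omega_0^{n-1}$.

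The gap is that the degree-zero claim is not merely ``delicate'' --- it is false as a general principle. On a non-K\"ahler compact manifold, the existence of a holomorphic connection on a line bundle $L$ (equivalently, the vanishing of its Atiyah class in $H^1(X,\Omega^1_X)$) does not force $\deg_{\omega_0}L=0$. Concretely, on a Hopf surface every element of $\mathrm{Pic}^0(X)\cong\mathbb{C}^*$ is a flat line bundle, hence carries a flat holomorphic connection, yet the Gauduchon degree map $\deg_{\omega_0}\colon\mathrm{Pic}^0(X)\to\mathbb{R}$ is surjective (see \cite{LT}); in particular $-K_X$ of a linear Hopf surface carries the holomorphic connection induced by the affine connection on $TX$ and has nonzero degree. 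The obstruction is exactly the one you flag: the degree is computed from the Bott--Chern class $c(L)\in H^{1,1}_{BC}(X,\mathbb{R})$, whose image in $H^1(X,\Omega^1_X)$ kills precisely the part that pairs nontrivially with $\omega_0^{n-1}$ in the non-K\"ahler setting. So your argument reduces the theorem to a statement with no available proof (and whose most natural general form is false); simple connectedness would rescue it only if one could also show that the curvature $\mathrm{tr}(R^\nabla)\in H^0(X,\Omega^2_X)$ of the induced connection on $-K_X$ vanishes, which is again not automatic. The actual proof in \cite{DM} is a substantially different and longer argument that confronts the possible zero divisor of $k$ directly (using invariance of that divisor under $A$, the finiteness of hypersurfaces on manifolds of algebraic dimension zero, and a local analysis of the connection along the divisor), rather than reducing to the trivial-$K_X$ case. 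As it stands, your proposal proves only Proposition \ref{proposition: zero}, not the stated theorem.
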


\subsection{Algebraic dimension one}

\begin{theorem} \label{theorem: alg dim 1} Let $X$ be a compact complex manifold of algebraic dimension one admitting a holomorphic Riemannian metric $g$. Then the fundamental group of $X$  is infinite.
\end{theorem}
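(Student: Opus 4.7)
My plan is to argue by contradiction. Suppose $\pi_1(X)$ is finite. The holomorphic Riemannian metric $g$ gives a nowhere vanishing section $\det g$ of $K_X^{\otimes 2}$, so the canonical bundle $K_X$ is $2$-torsion in $\mathrm{Pic}(X)$; take the étale double cover $X'$ trivializing it, then the universal cover $\tilde X$ of $X'$ (finite because $\pi_1(X)$, and hence $\pi_1(X')$, is finite). The outcome is a simply connected compact $\tilde X$ endowed with a pulled-back holomorphic Riemannian metric $\tilde g$, with trivial canonical bundle $K_{\tilde X}$, and, because a finite étale cover induces an algebraic extension of meromorphic function fields, algebraic dimension $a(\tilde X) = a(X) = 1$.

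Now apply Theorem \ref{theorem: algebraic dimension} with $p = 1$ to $(\tilde X, \tilde g)$: one obtains a complex abelian subalgebra $A \subset H^0(\tilde X, T\tilde X)$ of Killing fields for $\tilde g$ whose generic orbits have dimension at least $n - 1$. If the generic orbit has dimension $n$, pick $Y_1, \ldots, Y_n \in A$ linearly independent at a generic point: the section $Y_1 \wedge \cdots \wedge Y_n$ of the trivial line bundle $K_{\tilde X}^{-1}$ is nonzero somewhere, hence nowhere vanishing, so the $Y_i$ trivialize $T\tilde X$; Wang's theorem then identifies $\tilde X$ with a quotient of a complex Lie group (abelian since the $Y_i$ commute) by a lattice, i.e.\ a complex torus, contradicting $\pi_1(\tilde X) = 0$.

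The main case is when generic orbits have dimension exactly $n - 1$. Pick $Y_1, \ldots, Y_{n-1} \in A$ linearly independent at a generic point, and fix a holomorphic trivialization $\Omega$ of $K_{\tilde X}$ normalized so that $\Omega^{\otimes 2} = \det \tilde g$. Each $Y_i$ preserves $\tilde g$, hence preserves $\Omega^{\otimes 2}$, and since $\Omega$ is nowhere vanishing and $\tilde X$ is connected this forces $\mathcal{L}_{Y_i} \Omega = 0$. Define the holomorphic $1$-form
$$
\alpha \;:=\; \iota_{Y_{n-1}} \cdots \iota_{Y_1} \Omega \;\in\; H^0(\tilde X,\, \Omega^1_{\tilde X}).
$$
A short induction using Cartan's formula, together with the identities $[\mathcal{L}_{Y_i}, \iota_{Y_j}] = \iota_{[Y_i, Y_j]} = 0$ and $\mathcal{L}_{Y_i} \Omega = 0$, shows that $d\alpha = 0$. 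Since $\tilde X$ is simply connected, any holomorphic closed $1$-form is of the form $df$ for a global holomorphic function, and compactness forces $f$ to be constant; hence $\alpha \equiv 0$. But at any point $x$ where $Y_1(x), \ldots, Y_{n-1}(x)$ are linearly independent, $\alpha(x)$ is a nonzero covector because $\Omega(x)$ is nondegenerate, a contradiction.

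The step I expect to be most delicate is the Case 2 construction: one must simultaneously arrange that $K_{\tilde X}$ be honestly trivial (not merely that $K_{\tilde X}^{\otimes 2}$ be trivial), that the trivialization $\Omega$ be compatible with $\tilde g$ so that all Killing fields actually preserve $\Omega$ itself, and that Cartan's formula applied to the iterated contraction yields $d\alpha = 0$. This is precisely why the reduction at the outset must pass through the étale double cover associated to the $2$-torsion class of $K_X$ before taking the universal cover.
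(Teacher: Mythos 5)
Your proof is correct, and while the overall skeleton (contradiction, passage to a simply connected finite cover, Theorem~\ref{theorem: algebraic dimension} with $p=1$, separate treatment of the open-orbit and codimension-one-orbit cases, ending with a closed holomorphic $1$-form that must be exact and hence zero) matches the paper, your treatment of the crucial codimension-one case is genuinely different. The paper works entirely with the Levi-Civita connection: using that each Killing field $Y_i$ has pointwise $g$-skew-symmetric covariant derivative, it shows $\nabla_{Y_j}Y_i=0$, deduces that the image of $v\mapsto\nabla_v Y_i$ lies in the one-dimensional orthogonal complement of the orbit, and invokes the even-rank property of skew-symmetric endomorphisms to conclude that each $Y_i$ is parallel; the closed $1$-form is then the metric dual $g(Y_i,\cdot)$. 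You instead first arrange $K_{\tilde X}$ to be honestly trivial (via the double cover attached to the $2$-torsion class of $K_X$, which the paper also uses, but only implicitly through Corollary~\ref{corollary: hol metric} for the open-orbit case), observe that Killing fields preserve a holomorphic volume form $\Omega$ with $\Omega^{\otimes 2}=\det\tilde g$, and obtain your closed $1$-form as $\iota_{Y_{n-1}}\cdots\iota_{Y_1}\Omega$ via Cartan's formula and the commutativity of $A$; all the steps (in particular $\mathcal L_{Y_i}\Omega=0$ from $\mathcal L_{Y_i}\Omega=f_i\Omega$ and $2f_i=0$, and the nonvanishing of $\alpha$ at points where the $Y_i$ are independent) check out. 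The paper's route yields the extra geometric information that the $Y_i$ would have to be $\nabla$-parallel and uses the metric throughout; your route uses the metric only to force $K_X$ to be torsion and would apply verbatim to any rigid holomorphic geometric structure of algebraic dimension at most one whose local Killing fields preserve a holomorphic volume form, so it is somewhat more robust, at the price of the explicit reduction to trivial canonical bundle at the outset.
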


\begin{proof}
Assume by contradiction that $X$ has finite fundamental group. Up to a finite 
unramified cover, we can assume that $X$ is simply connected.

By Theorem \ref{theorem: algebraic dimension} there exists a finite dimensional 
abelian Lie subalgebra $A\, \subset\, H^0(X,\, TX)$ 
preserving $g$ and acting with generic orbits of dimension at least $n-1$, where $n$ is the 
complex dimension of $X$. The case where $A$ acts with an open orbit was settled in 
Section~\ref{section: algebraic dimension zero}.

Let us consider now the case where the generic orbits of $A$ are of dimension $n-1$. 
Since $A$ is abelian, the isotropy of a point $x \in X$ lying in a generic orbit $O$ 
acts trivially on $T_xO$ and hence on $T_xX$ (an orthogonal linear map which is identity on a 
hyperplane is trivial). Consequently, the isotropy of $A$ at $x$ is trivial and $A$ 
must have dimension $n-1$.
Let $$Y_1, Y_2, \cdots, Y_{n-1}\,\in\, H^0(X,\, TX)$$ be a basis of $A$.

Denote also by 
$g$ the symmetric bilinear form associated to the quadratic form $g$.
The functions $g(Y_i,Y_j)$ are holomorphic and hence are constant on $X$. So
$$
V \cdot g(Y_i,Y_j)\,=\,0
$$
for any local holomorphic vector field $V$, so
\begin{equation}\label{f2}
g(\nabla_VY_i, 
Y_j)+ g(Y_i,\nabla_V Y_j)\,=\,0\, .
\end{equation}

Since $Y_i$ are Killing vector fields with respect to $g$, it follows that 
$$
\varphi_i\, :\, TX\, \longrightarrow\, TX\, , ~~~~~~~~
w\, \longmapsto\, \nabla_w Y_i
$$
are pointwise $g$-skew-symmetric. Therefore, we have 
\begin{equation}\label{f1}
g(\nabla_V Y_i,\, W)+g(\nabla_W{Y_i},\, V)\,=\, 0
\end{equation}
for all locally defined holomorphic vector fields $V$, $W$. Combining \eqref{f2}
and \eqref{f1},
$$
0\,=\, g(\nabla_VY_i, Y_j)+ g(Y_i,\nabla_V Y_j)\,=\,- g(\nabla_{Y_j}Y_i, V)
-g(V,\nabla_{Y_i} Y_j)\,=\, -g(\nabla_{Y_j}Y_i+\nabla_{Y_i} Y_j, V)\, ,
$$
so $\nabla_{Y_j}Y_i+\nabla_{Y_i} Y_j\,=\, 0$. On the other hand,
the Levi-Civita connection $\nabla$ is torsionfree and the $Y_i$ commute, so
$\nabla_{Y_j}Y_i\,=\,\nabla_{Y_i}Y_j$. Therefore, we have
\begin{equation}\label{f3}
\nabla_{Y_j}Y_i\,=\, 0
\end{equation}
for all $i,j \,\in \, \{1, \cdots, n-1 \}$. Now combining \eqref{f2}, \eqref{f1} and
\eqref{f3},
$$
0\,=\,g(\nabla_VY_i, Y_j)+ g(\nabla_VY_j,Y_i)\,=\,g(\nabla_VY_i, Y_j)- 
g(\nabla_{Y_i}Y_j,V)\,=\,g(\nabla_VY_i, Y_j)
$$
for any local holomorphic vector field $V$ and $i,j \,\in \, \{1, \cdots, n-1 \}$.
So, the image $\varphi_i(TX)$ lies in the orthogonal part
$\{Y_1, \cdots, Y_{n-1}\}^{\perp}$ for all $i \,\in\, \{1, \cdots, n-1 \}.$

Fix a point $x \,\in\, X$ lying on a generic orbit of $A$. Then 
$\dim \varphi_i(T_xX)\, \leq\, 1$ is at most one as it lies
in $\{Y_1(x), \cdots, Y_{n-1}(x)\}^{\perp}$.
On the other hand, the rank of a skew-symmetric $g$-endomorphism is always even. It
now follows that $\varphi_i$ vanishes identically for all $i$.

Since $\varphi_i\,=\, 0$, the one-form $\omega_i$ dual to $Y_i$, defined by
$v\, \longmapsto\, g(Y_i,v)$, is closed. Indeed, if $V_1$ and $V_2$ are local holomorphic
vector fields then
$$
d \omega_i (V_1, V_2)\,=\,V_1 \cdot \omega_i (V_2) - V_2 \cdot \omega_i(V_1)-
\omega_i (\lbrack V_1, V_2 \rbrack)\,=\, g(\nabla_{V_1}Y_i, V_2) + g(Y_i, \nabla_{V_1}V_2)
$$
$$
-g(Y_i, \nabla_{V_2}V_1)-g(\nabla_{V_2}Y_i, V_1)-g(Y_i, \lbrack V_1, V_2 \rbrack)\,=\,
g(Y_i, \nabla_{V_1}V_2-\nabla_{V_2}V_1-\lbrack V_1, V_2 \rbrack)
$$
$$
=\, g(Y_i, T(V_1,V_2))\,=\, g(Y_i,0)\,=\,0
$$
where $T$ is the torsion of the Levi-Civita $\nabla$ connection (which is zero).
The above computation shows that the dual form $v\, \longmapsto\, g(Y,v)$ is closed if and
only if $v \, \longmapsto\,\nabla_v Y$ is pointwise $g$-symmetric. In particular, if $Y$ is a Killing
field with respect to $g$, the associated dual 
form $v\, \longmapsto\, g(Y,v)$ is closed if and only if $Y$ is parallel.

Since $X$ is simply connected, the closed form $\omega_i$ must be exact. This implies
$\omega_i$ vanishes identically because there is no nonconstant holomorphic function
on $X$. Hence $Y_i$ vanishes identically, which is a contradiction.
\end{proof}

\subsection{Other applications}

Theorem \ref{theorem: loc hom} has the following corollaries.

\begin{corollary}\label{corollary: loc hom} Let $X$ be a compact  Calabi-Yau manifold such that  $TX$ is polystable with respect to some Gauduchon metric on $X$. If $X$ admits  a  rigid holomorphic geometric  structure of affine type $\phi$, then the fundamental group of $X$ is infinite.
\end{corollary}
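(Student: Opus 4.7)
The plan is to argue by contradiction: suppose $\pi_1(X)$ is finite and pass to the universal cover $\pi\,:\,\widetilde{X}\,\longrightarrow\, X$, a finite unramified cover which is compact and simply connected. I would first check that all hypotheses of the corollary descend to $\widetilde{X}$. The equality $K_{\widetilde{X}}\,=\,\pi^*K_X$ yields $c(K_{\widetilde{X}})\,=\,0$, so $\widetilde{X}$ is Calabi--Yau. The pullback $\pi^*\omega_0$ of the Gauduchon metric is again Gauduchon because $\pi$ is locally biholomorphic. Polystability of $T\widetilde{X}\,=\,\pi^*TX$ with respect to $\pi^*\omega_0$ follows from pulling back the Hermitian--Yang--Mills metric on $TX$ provided by polystability. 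Finally, $\pi^*\phi$ is a rigid holomorphic geometric structure of affine type on $\widetilde{X}$.

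Theorem~\ref{theorem: loc hom} then forces $\pi^*\phi$ to be locally homogeneous, so local Killing fields of $\pi^*\phi$ span $T\widetilde{X}$ pointwise. Since $\pi^*\phi$ is rigid and $\widetilde{X}$ is simply connected, the Nomizu--Amores--Gromov extension theorem (already invoked in the proof of Theorem~\ref{theorem: algebraic dimension}) extends every such local Killing field to a global one. Consequently, I can select global holomorphic vector fields $Y_1,\,\ldots,\,Y_n\,\in\, H^0(\widetilde{X},\, T\widetilde{X})$ whose values span $T_x\widetilde{X}$ at some chosen point $x$.

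To upgrade pointwise spanning at $x$ to a global trivialization I would invoke Lemma~\ref{lem1}. Set
$$
\psi\,=\,\sum_{\sigma\in S_n}\mathrm{sgn}(\sigma)\,Y_{\sigma(1)}\otimes\cdots\otimes Y_{\sigma(n)}\,\in\, H^0(\widetilde{X},\,(T\widetilde{X})^{\otimes n}).
$$
By construction $\psi(x)\,\neq\,0$, so Lemma~\ref{lem1} says $\psi$ vanishes nowhere; equivalently, $Y_1,\ldots,Y_n$ trivialize $T\widetilde{X}$. Wang's theorem~\cite{Wa} then presents the parallelizable manifold $\widetilde{X}$ as a compact quotient $G/\Gamma$ of a simply connected complex Lie group $G$ by a discrete cocompact subgroup $\Gamma$. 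Simple connectedness of $\widetilde{X}$ forces $\Gamma\,=\,\{e\}$, so $\widetilde{X}\,=\,G$ is a compact complex Lie group and therefore a complex torus; since $\dim_{\mathbb{C}} X\,\geq\,1$, such a torus has infinite fundamental group, contradicting simple connectedness.

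The only delicate ingredient, and the step I would treat most carefully, is the verification that polystability of $TX$ with respect to some Gauduchon metric descends to the finite cover $\widetilde{X}$: because the Gauduchon class is not canonical, one has to argue (via the Hermitian--Yang--Mills metric, as above) that $T\widetilde{X}$ is polystable with respect to the specific metric $\pi^*\omega_0$, rather than appeal to an abstract statement. Every other step is a direct application of a result earlier in the paper or a classical theorem.
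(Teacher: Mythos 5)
Your proof is correct and follows essentially the same route as the paper's: pass to a finite unramified cover, invoke Theorem~\ref{theorem: loc hom} to get local Killing fields spanning the tangent bundle, extend them globally via Nomizu--Amores--Gromov on the simply connected cover, apply Lemma~\ref{lem1} to the top antisymmetrized tensor to upgrade spanning at a point to a global trivialization, and conclude with Wang's theorem. The only difference is that you explicitly check that the Calabi--Yau, Gauduchon and polystability hypotheses descend to the finite cover (a point the paper leaves implicit); this is a worthwhile clarification rather than a different argument.
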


\begin{proof}
{}From Theorem~\ref{theorem: loc hom} we know that $\phi$ is locally homogeneous. Consider
local holomorphic Killing vector fields $Y_1, \cdots, Y_n$ which span $TX$ on an open subset
in $X$.

Assume by contradiction that the fundamental group is finite. Then, up to a finite 
unramified cover, $X$ is simply connected and Nomizu's extension result~\cite{No} 
proves that the Killing vector fields $Y_i$ extend to global holomorphic vector fields 
$\widetilde{Y_i} \,\in\, H^0(X, \, TX)$. Moreover, Lemma~\ref{lem1} shows that the holomorphic 
tensor $\widetilde{Y_1} \wedge \cdots \wedge \widetilde{Y_n}$ do not vanish on $X$. Hence, the 
$\{\widetilde{Y_i}\}$ trivialize the holomorphic tangent bundle of $X$. By a result of 
Wang~\cite{Wa}, $X$ is biholomorphic to a quotient of a complex Lie group $G$ by a 
lattice. In particular, the fundamental group of $X$ is infinite, which is a contradiction. 
\end{proof}

\begin{corollary}\label{corollary: Riem metric}
Let $X$ be a compact complex manifold such that $TX$ is polystable with respect to 
some Gauduchon metric on $X$. If $X$ admits a holomorphic Riemannian metric $g$, then 
$g$ is locally homogeneous and the fundamental group of $X$ is infinite.
\end{corollary}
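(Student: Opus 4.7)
The plan is to reduce directly to Theorem~\ref{theorem: loc hom} and Corollary~\ref{corollary: loc hom} by first establishing that $X$ is Calabi--Yau in the sense of Section~\ref{section: tensors}. Once this is in hand, the conclusion follows almost immediately, since $g$ is a rigid holomorphic geometric structure of affine type: holomorphic tensors are of affine type and of order one, and Section~\ref{section: geometric structures} recalls that holomorphic Riemannian metrics are rigid in Gromov's sense.

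The substantive step is to verify that $c(K_X)\,=\,0$ in $H^{1,1}_{BC}(X,\,\mathbb{R})$. The holomorphic Riemannian metric $g$, being a fiberwise non-degenerate section of $\text{S}^2((TX)^*)$, has a well defined determinant $\det g$ which is a nowhere vanishing holomorphic section of $K_X^{\otimes 2}$. Hence $K_X^{\otimes 2}$ is holomorphically trivial, and equipping it with the trivial Hermitian structure yields a flat Chern connection; therefore $2\,c(K_X)\,=\,c(K_X^{\otimes 2})\,=\,0$ in $H^{1,1}_{BC}(X,\,\mathbb{R})$, which forces $c(K_X)\,=\,0$ since the Bott--Chern group is a real vector space. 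Thus $X$ is Calabi--Yau in the required sense.

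With $X$ now Calabi--Yau and $TX$ polystable with respect to a Gauduchon metric, Theorem~\ref{theorem: loc hom} applies directly to $g$ and produces local homogeneity; since $g$ is moreover rigid of affine type, Corollary~\ref{corollary: loc hom} applies with $\phi\,=\,g$ and yields that $\pi_1(X)$ is infinite. The only potential obstacle is the determinant observation above, but this is elementary; beyond it the corollary is simply a packaging of Theorem~\ref{theorem: loc hom} and Corollary~\ref{corollary: loc hom} together with the remark that a holomorphic Riemannian metric forces $K_X^{\otimes 2}$ to be holomorphically trivial.
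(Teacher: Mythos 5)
Your proof is correct, and its overall shape is the same as the paper's: verify that $X$ is Calabi--Yau and then quote Theorem~\ref{theorem: loc hom} and Corollary~\ref{corollary: loc hom}, noting that a holomorphic Riemannian metric is a rigid holomorphic geometric structure of affine type. The one place where you diverge is in how the Calabi--Yau condition is checked. The paper argues as in Corollary~\ref{corollary: hol metric}: the metric reduces the structure group of the frame bundle to $O(n,\mathbb{C})$, and after passing to a double cover one gets a reduction to ${\rm SO}(n,\mathbb{C})$, hence a trivial canonical bundle on the cover. You instead stay on $X$ itself, observing that $\det g$ is a nowhere vanishing holomorphic section of $K_X^{\otimes 2}$, so $2\,c(K_X)=0$ and hence $c(K_X)=0$ in the real vector space $H^{1,1}_{BC}(X,\mathbb{R})$. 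Both are valid, but your version is slightly cleaner: the paper's route implicitly requires that the Gauduchon metric and the polystability of the tangent bundle persist on the double cover (true, e.g.\ by pulling back the Hermitian--Yang--Mills metric, but not commented on), and that local homogeneity and infinitude of the fundamental group descend from the cover; your argument applies the two cited results directly to $X$ and needs none of this bookkeeping.
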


\begin{proof} As in the proof of Corollary~\ref{corollary: hol metric}, the manifold
$X$ admits a double cover with trivial canonical bundle. Hence, $X$ is Calabi-Yau and Theorem \ref{theorem: loc hom} and 
Corollary~\ref{corollary: loc hom} apply.
\end{proof}

\begin{corollary} Let $X$ be a compact complex manifold such that the canonical bundle $K_X$ is of finite order  and  $TX$ is polystable with respect to some Gauduchon metric on $X$. Then
any holomorphic conformal structure and any holomorphic projective connection on $X$ are locally homogeneous. In particular, if $X$ admits a holomorphic conformal structure or a holomorphic projective connection then the fundamental group of $X$ is infinite.
\end{corollary}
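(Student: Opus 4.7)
The plan is to reduce each case to the affine-type setting handled by Corollary~\ref{corollary: loc hom}, by passing to a finite unramified cover trivializing $K_X$ and converting the structure at hand into a genuine holomorphic Riemannian metric or a genuine holomorphic affine connection.

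First, since $K_X$ has finite order, I would pass to a finite unramified cover $\widetilde{X} \,\to\, X$ with $K_{\widetilde{X}}$ holomorphically trivial. The Gauduchon property and polystability of the tangent bundle both survive finite étale pullbacks, so $\widetilde{X}$ remains a compact Calabi--Yau manifold whose tangent bundle is polystable with respect to a Gauduchon metric, placing $\widetilde{X}$ in the setting of Corollary~\ref{corollary: loc hom}. Any rigid holomorphic geometric structure on $X$ also pulls back to a rigid holomorphic geometric structure on $\widetilde{X}$.

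For the conformal case, a holomorphic conformal structure $\omega \,\in\, H^0(X,\, S^2(T^*X) \otimes L)$ induces an isomorphism $TX \,\cong\, T^*X \otimes L$ and hence $L^n \,\cong\, K_X^{-2}$, forcing $L$ itself to have finite order. Pulling back to a further finite cover trivializing $L$, the section $\omega$ becomes an honest holomorphic Riemannian metric, and Corollary~\ref{corollary: Riem metric} applies on that cover. For the projective case, I would use a global trivialization of $K_{\widetilde{X}}$ to canonically select a torsion-free affine representative $\nabla$ of the projective class: the relevant local fact is that two projectively equivalent torsion-free connections $\nabla' \,-\, \nabla \,=\, \alpha \odot \mathrm{Id}$ induce connections on $K_X$ differing by $-(n+1)\alpha$, so within any local representative the unique 1-form killing the induced connection form on $K_{\widetilde{X}}$ (with respect to the fixed trivialization) is $\alpha \,=\, \beta/(n+1)$. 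A direct cocycle check shows that these local normalizations glue into a single global holomorphic torsion-free affine connection on $\widetilde{X}$, and this is a rigid holomorphic geometric structure of affine type; Corollary~\ref{corollary: loc hom} then applies to it.

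To conclude, the local Killing fields produced by Corollary~\ref{corollary: loc hom} on $\widetilde{X}$ automatically preserve the conformal (respectively, projective) class that was originally pulled back, and they push down through the covering map to local Killing fields of the original structure at generic points of $X$, yielding local homogeneity on $X$. Infiniteness of $\pi_1(X)$ then follows from the infiniteness of $\pi_1(\widetilde{X})$ given by the same corollary. The main obstacle I expect is the gluing step in the projective case: one must carefully track signs in the transformation law to verify that the locally normalized affine representatives fit together into a globally defined torsion-free holomorphic affine connection on $\widetilde{X}$, rather than suffering a cocycle obstruction.
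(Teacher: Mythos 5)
Your proposal is correct and follows essentially the same route as the paper: pass to a finite cover trivializing $K_X$ (and, in the conformal case, the auxiliary line bundle $L$, using $L^n\cong K_X^{-2}$), replace the conformal or projective structure by a genuine holomorphic Riemannian metric or affine connection, and invoke Corollary~\ref{corollary: Riem metric}, respectively Corollary~\ref{corollary: loc hom}. You merely spell out the normalization of the projective representative and the torsion argument for $L$ that the paper delegates to the references \cite{Gu} and \cite{KO}.
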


\begin{proof}
Up to a finite cover, $K_X$ is trivial. Then any holomorphic conformal structure 
admits a global representative which is a holomorphic Riemannian metric and 
Corollary~\ref{corollary: Riem metric} applies. Also, since $K_X$ is trivial, any 
holomorphic projective connection admits a global representative which is a 
holomorphic affine connection (see \cite[p.~96]{Gu} and \cite[p.~78--79]{KO}) and 
Corollary~\ref{corollary: loc hom} applies.
\end{proof}

\begin{theorem} \label{theorem: simple} Let $X$ be a compact complex manifold such that $TX$ is simple. If $X$ admits a holomorphic Riemannian metric, then the fundamental group of $X$ is infinite.
\end{theorem}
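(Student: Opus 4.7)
The plan is by contradiction: assume $\pi_1(X)$ is finite. Pass to the compact simply-connected finite étale universal cover $\pi\colon\widetilde X\to X$ with deck group $\Gamma$, equipped with the pulled-back metric $\widetilde g=\pi^*g$. Applying Theorem~\ref{theorem: algebraic dimension} to $\widetilde X$, one obtains an abelian Lie subalgebra $A\subset H^0(\widetilde X,T\widetilde X)$ of Killing fields for $\widetilde g$, with generic orbits of dimension at least $n-\widetilde p$, where $\widetilde p$ is the algebraic dimension of $\widetilde X$. If $A=0$, then $\widetilde X$ is Moishezon, and Proposition~\ref{prop1} applied to the Levi-Civita connection of $\widetilde g$ forces $\widetilde X$ to be a complex torus, incompatible with simple connectedness when $n\geq 1$. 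Hence $A\neq 0$.

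The aim is to extract from $A$ a nonzero parallel holomorphic vector field, which yields the standard contradiction: its dual 1-form is closed, exact on simply connected $\widetilde X$, and then zero by compactness. When $A^\Gamma\neq 0$, pick $Y\in A^\Gamma$ nonzero; it descends to a Killing field $Y_0\in H^0(X,TX)$ for $g$, and simplicity of $TX$ forces $\nabla Y_0\in H^0(X,\mathrm{End}(TX))=\mathbb{C}\cdot\mathrm{Id}$. Since $\nabla Y_0$ is $g$-skew-symmetric while $\mathrm{Id}$ is $g$-symmetric, $\nabla Y_0=0$, so $Y_0$ is parallel. The closed holomorphic 1-form $\eta=g(Y_0,\cdot)$ on $X$ has periods in a finite subgroup of $\mathbb C$ (as $\pi_1(X)$ is finite), hence trivial periods, so $\eta=df$ for a holomorphic function $f$; compactness of $X$ gives $\eta=0$ and then $Y_0=0$, a contradiction.

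The main obstacle is the case $A^\Gamma=0$. For $Y\in A$ in a nontrivial $\chi$-isotypic component under $\Gamma$, the $\widetilde g$-skew-symmetric section $\nabla Y$ sits in the $\chi$-isotypic part of $H^0(\widetilde X,\mathrm{End}(T\widetilde X))$, which via the decomposition $\pi_*\mathcal O_{\widetilde X}=\bigoplus_\chi L_\chi$ into torsion line bundles on $X$ corresponds to a section $\psi\in H^0(X,\mathrm{End}(TX)\otimes L_\chi)$ with $L_\chi$ a nontrivial torsion line bundle. A twisted version of Lemma~\ref{lem2} must be established here: using that $\psi^k\in H^0(X,\mathrm{End}(TX))=\mathbb{C}\cdot\mathrm{Id}$ (with $k$ the order of $L_\chi$) by simplicity of $TX$, the case $\psi^k=c\cdot\mathrm{Id}$ with $c\neq 0$ is ruled out because the pointwise eigenspace decomposition of $\psi$ into $k$-th roots of $c$ would split $TX$ into holomorphic summands, violating simplicity, and would force $\psi=s\cdot\mathrm{Id}$ with $s\in H^0(X,L_\chi)=0$; the remaining pointwise nilpotent case is eliminated by an inductive argument combining the skew-symmetry of $\psi$ with the simplicity of $TX$. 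Once this twisted vanishing is in place, $\nabla Y=0$ on $\widetilde X$ for every $Y\in A$, and the parallel-1-form argument on the simply connected $\widetilde X$ yields $Y=0$, contradicting $A\neq 0$.
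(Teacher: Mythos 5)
Your overall architecture is the paper's: dispose of the Moishezon case with Proposition~\ref{prop1}, and otherwise use Theorem~\ref{theorem: algebraic dimension} on a simply connected finite unramified cover to produce a nontrivial Killing field $Y$, show that its dual $1$--form $g(Y,\cdot)$ is closed by playing the $g$--skew-symmetry of $\nabla Y$ against simplicity (which forces $\nabla Y$ to be a scalar multiple of the identity, hence zero), and finish by exactness plus compactness. Your sub-case $A^\Gamma\neq 0$ is exactly this computation, carried out on $X$ itself, and it is correct. Where you diverge is in refusing to use simplicity anywhere but on $X$: this forces you to confront the case where no nonzero element of $A$ is invariant under the deck group. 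The paper instead applies Lemma~\ref{lem2} directly on the simply connected cover $\widetilde X$, i.e.\ it uses simplicity of $T\widetilde X$; you are right that this does not formally follow from simplicity of $TX$, since $H^0(\widetilde X,\text{End}(T\widetilde X))\,=\,\bigoplus_\rho H^0(X,\text{End}(TX)\otimes E_\rho)^{\oplus\dim\rho}$, the sum running over irreducible representations $\rho$ of the deck group with associated flat bundles $E_\rho$. So the subtlety you flag is real.

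The problem is that your resolution of the case $A^\Gamma=0$ is not a proof. First, the decomposition $\pi_*\mathcal O_{\widetilde X}=\bigoplus_\chi L_\chi$ into torsion \emph{line} bundles indexed by characters requires $\Gamma$ abelian; for general finite $\Gamma$ the summands $E_\rho$ have higher rank and the step ``$\psi^k\in H^0(X,\text{End}(TX))=\mathbb C\cdot\text{Id}$'' no longer makes sense as stated. Second, even in the abelian case the two branches are incomplete: when $\psi^k=c\cdot\text{Id}$ with $c\neq 0$, the eigenspace decomposition you invoke lives a priori on $\widetilde X$ (where $L_\chi$ trivializes) and splits $T\widetilde X$, not $TX$; the eigen-subbundles are permuted by $\Gamma$, need not descend, and in any case a holomorphic splitting of $T\widetilde X$ does not contradict simplicity of $TX$. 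When $\psi$ is pointwise nilpotent, ``an inductive argument combining the skew-symmetry of $\psi$ with the simplicity of $TX$'' is an assertion, not an argument: the natural objects produced are the subsheaves $\ker\psi^j$ of $TX$, and simplicity does not forbid $TX$ from having nontrivial subsheaves (stable bundles are simple and have plenty), so it is not clear how the induction closes. As written, the theorem is therefore not proved in the case $A^\Gamma=0$. Two honest ways out: restrict the statement to simply connected $X$ (which is how the theorem is used in the introduction, and where your $A^\Gamma\neq0$ argument applies verbatim with trivial $\Gamma$), or supply the missing lemma that simplicity of $TX$ together with the existence of $g$ forces $H^0(X,\text{End}(TX)\otimes E_\rho)=0$ for nontrivial $\rho$, so that Lemma~\ref{lem2} can legitimately be applied upstairs as the paper does.
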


\begin{proof}
Since $TX$ admits a holomorphic affine connection, if $X$ is Moishezon, then 
Proposition~\ref{prop1} shows that $X$ has a finite  cover which is a  torus. In
particular the fundamental group of $X$ is infinite.

Consider  now the case where the algebraic dimension  of $X$ is not maximal. 

Assume, by contradiction, that the fundamental group of $X$ is finite. Up to a finite 
cover, we suppose that $X$ is simply connected. Then Theorem \ref{theorem: algebraic 
dimension} implies that there exists a nontrivial globally defined Killing field $Y$ 
on $X$. Its dual $\omega$, $v \, \longmapsto\,g(Y, v)$, is a holomorphic one-form on $X$. 
Lemma~\ref{lem2} implies that $d \omega \,=\, 0$. Since $X$ is simply connected, $\omega$
must be exact and so $\omega \,=\,0$ because there is no nonconstant holomorphic function
on $X$: a contradiction (since $Y$ is nontrivial).
\end{proof}

\subsection{Nef tangent bundle}

Let $X$ be a compact complex manifold such that the holomorphic tangent bundle $TX$ is 
nef \cite[p.~305, Definition~1.9]{DPS} (see also \cite[p.~299, Definition~1.2]{DPS}). 
Assume that $X$ admits a holomorphic Riemannian metric $g$. Then $g$ identifies 
$TX$ with $(TX)^*$, so $TX$ is numerically flat \cite[p.~311, Definition~1.17]{DPS}. 
Hence $(TX)^{\otimes m}\otimes ((TX)^*)^l$ is numerically flat for all nonnegative 
integers $m, l$ \cite[p.~307, Proposition~1.14]{DPS}. Hence any holomorphic section
of $(TX)^{\otimes m}\otimes ((TX)^*)^l$ that vanishes at some point of $X$ vanishes
identically \cite[p.~310, Proposition~1.16]{DPS}. Therefore, 
any holomorphic geometric structure of affine type on $X$ is locally homogeneous;
see the proof of Theorem \ref{theorem: loc hom}. Now we conclude that
$g$ is locally homogeneous and the fundamental group of $X$ is infinite (see
the proof of Corollary \ref{corollary: Riem metric}).

\section*{Acknowledgements}

We thank Valentino Tosatti for helpful comments. The first-named author wishes
to thank the Laboratoire J. -A. Dieudonn\'e  for hospitality. 


\end{document}